\newcommand{\bB}{{\mathbb{B}}}
\newcommand{\bC}{{\mathbb{C}}}
\newcommand{\bM}{{\mathbb{M}}}
\newcommand{\bS}{{\mathbb{S}}}
  \newcommand{\B}{{\mathcal{B}}}
\renewcommand{\S}{{\mathcal{S}}}
\newcommand{\fK}{{\mathfrak{K}}}
\newcommand{\rC}{\mathrm{C}}
\renewcommand{\phi}{\varphi}
\newcommand{\upchi}{{\raise.35ex\hbox{$\chi$}}}
\newcommand{\ol}{\overline}
\newcommand{\mycomment}[1]{}
\newcommand{\qand}{\quad\text{and}\quad}
\newcommand{\id}{\operatorname{id}}
\newcommand{\re}{\operatorname{Re}}
\newcommand{\at}{\operatorname{at}}
\newtheorem{lemma}{Lemma}[section]
\newtheorem{theorem}[lemma]{Theorem}
\newtheorem{proposition}[lemma]{Proposition}
\newtheorem{corollary}[lemma]{Corollary}
\newtheorem{theoremx}{Theorem}
\theoremstyle{definition}
\newtheorem{example}{Example}
\author{Rapha\"el Clou\^atre}
\address{Department of Mathematics, University of Manitoba, Winnipeg, Manitoba, Canada R3T 2N2}
\email{raphael.clouatre@umanitoba.ca\vspace{-2ex}}
\thanks{R.C. was partially supported by an NSERC Discovery Grant.}
\title[A new obstruction to Arveson's conjecture]{A new obstruction to Arveson's hyperrigidity conjecture}
\begin{document}
\begin{abstract}
Let $A$ be a unital $\rC^*$-algebra containing a closed two-sided ideal $J$ and an operator system $X$. We enlarge $X$ to an operator system $\S(X,J)$  in $\bM_2(A)$, and show that in order for $\S(X,J)$ to be hyperrigid,  each $*$-represen-\ tation of $\rC^*(X)$ annihilating $\rC^*(X)\cap J$ must admit a unique contractive completely positive extension from $X$ to the larger $\rC^*$-algebra $\rC^*(X)+J$.  We leverage this implicit additional rigidity constraint to construct counterexamples to Arveson's hyperrigidity conjecture. A key condition in our construction is the mutual orthogonality of the atomic projection of $\rC^*(X)$ and the support projection of $J$, which we  interpret as a new obstruction to the conjecture. Specializing to the case where $J$ is the ideal of compact operators on a Hilbert space, we recover as a by-product of our general construction the recent counterexample of Bilich and Dor-On. On the other hand, we find that such a pathology cannot be implemented using our construction when $A$ admits only finite-dimensional irreducible $*$-representations, thereby illustrating that the obstruction only manifests itself in  noncommutative settings.
\end{abstract}
\maketitle

\section{Introduction}

Let $A$ be a unital $\rC^*$-algebra and let $X\subset A$ be an operator system. Denote by $\rC^*(X)$ the $\rC^*$-algebra that $X$ generates inside $A$. Recall that a unital $*$-representation $\pi:\rC^*(X)\to B(H)$ is said to have the \emph{unique extension property} with respect to $X$ if, given a unital completely positive map $\psi:\rC^*(X)\to B(H)$ agreeing with $\pi$ on $X$, it must necessarily follow that $\pi$ and $\psi$ agree everywhere on $\rC^*(X)$. If, in addition, $\pi$ is irreducible, then it is said to be a \emph{boundary representation} for $X$. Originally introduced by Arveson in \cite{arveson1969}, these representations have become a mainstay in modern approaches to studying operator systems and nonselfadjoint operator algebras. Indeed, they can be used to compute $\rC^*$-envelopes \cite{dritschel2005boundary},\cite{arveson2008noncommutative},\cite{davidson2015choquet}, and they can be meaningfully  interpreted as the Choquet boundary in noncommutative convexity \cite{kleski2014boundary},\cite{davidson2019noncommutative}.

The motivation for this paper is a conjecture formulated by Arveson in 2011 \cite{arveson2011noncommutative}, in analogy with some classical results in approximation theory \cite{korovkin1953convergence},\cite{vsavskin1967mil}.  We say that $X$ is \emph{hyperrigid} in $\rC^*(X)$ if all unital $*$-representations of $\rC^*(X)$ have the unique extension property with respect to $X$. The so-called \emph{hyperrigidity conjecture} is that in order for $X$ to be hyperrigid, it suffices that all irreducible $*$-representations of $\rC^*(X)$ be boundary representations for $X$.  Much work has been done on this problem since its formulation, see for instance \cite{kleski2014korovkin},\cite{kennedy2015essential},\cite{clouatre2018non},\cite{clouatre2018unperforated},\cite{davidson2021choquet},\cite{clouatre2023boundary},\cite{PS2024I},\cite{PS2024II},\cite{PS2024III}  and the references therein.

Significant breakthroughs related to this conjecture occurred in last two years, and the fog surrounding the problem has begun to clear \cite{CTh2024},\cite{scherer2024},\cite{scherer2025},\cite{bilich2025}. Of particular relevance for our current endeavour is \cite{BDO2024}, where the first counterexample to the conjecture was found. Therein, a singly generated unital operator algebra generating an essentially abelian $\rC^*$-algebra is explicitly constructed and shown to violate the conjecture. That such an otherwise well-behaved object could disprove the conjecture motivates the search for a hidden obstruction. The aim of this paper is to uncover such an obstruction, by means of the following general construction inspired by  \cite{BDO2024}.

Let $J\subset A$ be a closed two-sided ideal. To avoid degeneracies we assume that $J$ is not contained in $\rC^*(X)$. We 
define the operator system $\S(X,J)\subset \bM_2(A)$ to consist of elements of the form
\begin{equation}\label{Eq:SXJ}
\begin{bmatrix}
x & j \\ k & b
\end{bmatrix} \quad \text{where } x\in X,j\in J,k\in J \text{ and }b\in J+\bC 1.
\end{equation}
Let $\B(X,J)$ denote the $\rC^*$-algebra that $\S(X,J)$ generates in $\bM_2(A)$.
In Section \ref{S:uep}, we examine the unique extension property with respect to $\S(X,J)$ for $*$-representations  of $\B(X,J)$. We show in Lemma \ref{L:JinB} that $\B(X,J)$ consists of elements of the form $\begin{bmatrix} a & b \\ c & d\end{bmatrix}$ with $a\in \rC^*(X)+J$, $b\in J,c\in J$ and $d\in J+\bC 1$. In particular, $\B(X,J)$ contains the closed two-sided ideal $\bM_2(J)$. 
Then, by basic representation theory of $\rC^*$-algebras, a  $*$-representation of $\B(X,J)$ splits as a direct sum of two representations: one annihilating $\bM_2(J)$, and one that is nondegenerate   on $\bM_2(J)$. We show in Theorem \ref{T:uepnondeg} that all $*$-representations that are nondegenerate on $\bM_2(J)$ have the unique extension property with respect to $\S(X,J)$. Thus, from the point of view of hyperrigidity and Arveson's conjecture, there only remains to analyze those $*$-representations annihilating $\bM_2(J)$. 

Assume  then that we are given a unital $*$-representation $\Pi:\B(X,J)\to B(H)$ annihilating $\bM_2(J)$. A crucial observation that we make in Lemma \ref{L:singrep} is that there must exist a unique $*$-representation  $\pi:\rC^*(X)+J\to B(H)$ for which the pair $(\Pi,\pi)$ is $(X,J)$-\emph{adapted} in the sense that
\[
\Pi\left(\begin{bmatrix}
a& b \\ c & d
\end{bmatrix}\right)-\chi(d)1=\pi(a-\chi(d)1)
\]
for each $\begin{bmatrix}
a& b \\ c & d
\end{bmatrix}\in \B(X,J)$. (Here, $\chi:J+\bC 1\to \bC$ denotes the quotient map corresponding to $J$.) In Theorem \ref{T:singrepuep}, we completely characterize when $\Pi$ has the unique extension property with respect to $\S(X,J)$  in terms of $\pi$. Unexpectedly, the requirement is that  $\pi|_{X}$ must admit a unique contractive completely positive extension not only to $\rC^*(X)$, but to the larger $\rC^*$-algebra $\rC^*(X)+J$. This ``hidden" additional rigidity requirement is the key to producing counterexamples to Arveson's conjecture. As a first indication of this pathology, we show in Corollary \ref{C:intnothyperrigid} that the operator system $\S(X,J)$ is not hyperrigid in $\B(X,J)$ whenever $\rC^*(X)\cap J=\{0\}$.

In order for $\S(X,J)$ to serve as counterexample to the conjecture, however, we must also verify that all irreducible $*$-representations of $\B(X,J)$ are boundary representations for $\S(X,J)$. This is more delicate, and in Section \ref{S:obst} we undertake the task of completely characterizing when this occurs. For this purpose, we consider the atomic projection $z^X_{\at}$ of $\rC^*(X)$ (viewed as an element of $\rC^*(X)^{\perp\perp}\subset A^{**}$) and examine its relationship with the support projection $p_J\in A^{**}$ of $J$. Theorem \ref{T:projcriterion} gives the desired characterization in terms of these projections. As a consequence of Theorem \ref{T:projcriterion}, we show in Corollary \ref{C:liminal} that when all irreducible $*$-representations of $A$ are finite-dimensional, then there is at least one irreducible $*$-representation of $\B(X,J)$ without the unique extension property.  In particular, this means that a counterexample to the conjecture can only ever be implemented, at least using the construction at hand, in situations that are somewhat far from being commutative. Thus, this machinery does not shed new light on the commutative version of Arveson's conjecture, which is still open.   A further consequence of our characterization is Corollary \ref{C:counterex}, which is the main result of the paper and reads as follows.

\begin{theoremx}\label{T:A}
Let $A$ be a unital $\rC^*$-algebra containing a closed two-sided ideal $J$ and an operator system $X$.  Assume that 
\begin{enumerate}[{\rm (i)}]
\item $\rC^*(X)$ and $J$ have trivial intersection,
\item the projections $z^X_{\at}$ and $p_J$ are mutually orthogonal in $A^{**}$, and
\item all the irreducible $*$-representations of $\rC^*(X)$ are boundary representations for $X$.
\end{enumerate}
Then, $\S(X,J)\subset \B(X,J)$ is a counterexample to Arveson's hyperrigidity conjecture.
\end{theoremx}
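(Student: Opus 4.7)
The plan is to establish the two properties that characterize $\S(X,J)\subset \B(X,J)$ as a counterexample to Arveson's hyperrigidity conjecture: first, that $\S(X,J)$ is \emph{not} hyperrigid in $\B(X,J)$, and second, that every irreducible $*$-representation of $\B(X,J)$ \emph{is} a boundary representation for $\S(X,J)$.

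The failure of hyperrigidity will follow at once from Corollary \ref{C:intnothyperrigid}, whose hypothesis $\rC^*(X)\cap J=\{0\}$ is precisely condition (i).

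The substantial work is to verify that every irreducible $*$-representation $\Pi$ of $\B(X,J)$ has the unique extension property with respect to $\S(X,J)$; for this I would invoke Theorem \ref{T:projcriterion}, which furnishes a complete characterization of this property in terms of the atomic projection $z^X_{\at}$ and the support projection $p_J$ inside $A^{**}$. The plan is to argue that assumptions (ii) and (iii) together supply exactly the hypotheses of that characterization. Conceptually, irreducible $*$-representations of $\B(X,J)$ split into two classes relative to the ideal $\bM_2(J)$. For those that are nondegenerate on $\bM_2(J)$, the unique extension property is automatic from Theorem \ref{T:uepnondeg}. For those that annihilate $\bM_2(J)$, Lemma \ref{L:singrep} produces an $(X,J)$-adapted partner $\pi:\rC^*(X)+J\to B(H)$, and Theorem \ref{T:singrepuep} translates the unique extension property for $\Pi$ into the uniqueness of contractive completely positive extensions of $\pi|_X$ from $X$ all the way up to $\rC^*(X)+J$. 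The orthogonality of $z^X_{\at}$ and $p_J$ from (ii) forces such $\pi$ to annihilate $J$ and thereby reduce to an irreducible $*$-representation of $\rC^*(X)$, at which point condition (iii) supplies the boundary representation property on $\rC^*(X)$.

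The hard part will be the annihilating case, since the rigidity demanded by $\S(X,J)$ is the uniqueness of completely positive extensions from $X$ to the strictly \emph{larger} algebra $\rC^*(X)+J$, rather than merely to $\rC^*(X)$; this is exactly the ``hidden" obstruction emphasized in the introduction. The orthogonality condition (ii) is what closes this gap: any completely positive extension to $\rC^*(X)+J$ of a representation supported on the atomic part of $\rC^*(X)$ must vanish on $J$ because its support lies disjointly from $p_J$, so that the extension is entirely determined by its values on $\rC^*(X)$, where (iii) takes over. Modulo this reduction, the proof is the composition of Theorem \ref{T:projcriterion} with Corollary \ref{C:intnothyperrigid}.
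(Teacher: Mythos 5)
Your proposal is correct and follows the paper's own proof essentially verbatim: the counterexample is obtained by combining Corollary \ref{C:intnothyperrigid} (failure of hyperrigidity, supplied by hypothesis (i)) with Theorem \ref{T:projcriterion} (all irreducible $*$-representations of $\B(X,J)$ are boundary representations for $\S(X,J)$, supplied by (ii) and (iii)). The only detail worth making explicit is that hypothesis (i) forces $p^X_J=0$, so that condition (ii) here coincides with condition (ii) of Theorem \ref{T:projcriterion}, which is stated in terms of $z^X_{\at}(1-p^X_J)$ rather than $z^X_{\at}$.
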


In particular, we interpret the mutual orthogonality of $z^X_{\at}$ and $p_J$ to be an obstruction to Arveson's conjecture.

Finally, in Section \ref{S:compact}, we specialize our results to the case where $J$ is the ideal of compact operators on some Hilbert space. In this case, we exhibit a structural condition on the von Neumann algebra $\rC^*(X)''$ that is sufficient for the obstruction to dissipate (Theorem \ref{T:counterexK}). As an application, we explain in Example \ref{E:BDO} how to recover the counterexample found in \cite{BDO2024} from our machinery. We also obtain the following streamlined mechanism for producing counterexamples to Arveson's conjecture (Corollary \ref{C:ampcounterex}). Here, we use the notation
\[
X_\infty=\left\{\bigoplus_{n=1}^\infty x:x\in X\right\}\subset B(H^{(\infty)}).
\]

\begin{theoremx}\label{T:B}
Let $X\subset B(H)$ be an operator system. Assume that 
\begin{enumerate}[{\rm (i)}]
\item the von Neumann algebra $\rC^*(X)''$ contains no type $I$ factor as a direct summand, and
\item all the irreducible $*$-representations of $\rC^*(X)$ are boundary representations for $X$.
\end{enumerate}
Then, $\S(X_\infty,\fK(H^{(\infty)}))\subset \B(X_\infty,\fK(H^{(\infty)})$ is a counterexample to Arveson's hyperrigidity conjecture.
\end{theoremx}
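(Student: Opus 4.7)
My plan is to deduce Theorem~\ref{T:B} from Theorem~\ref{T:counterexK} applied to the pair $(X_\infty,\fK(H^{(\infty)}))$ inside $B(H^{(\infty)})$. There are three hypotheses to verify: (a) triviality of the intersection $\rC^*(X_\infty)\cap\fK(H^{(\infty)})$; (b) every irreducible $*$-representation of $\rC^*(X_\infty)$ is a boundary representation for $X_\infty$; and (c) the structural condition on $\rC^*(X_\infty)''$ required by Theorem~\ref{T:counterexK}.

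The main tool would be the unital $*$-homomorphism $\iota:\rC^*(X)\to B(H^{(\infty)})$ defined by $\iota(a)=\bigoplus_{n=1}^\infty a$, which spatially is the ampliation $a\mapsto a\otimes 1_{\ell^2}$. This map is normal, isometric, and injective, with image equal to $\rC^*(X_\infty)$ (the image is a $\rC^*$-subalgebra containing $X_\infty=\iota(X)$, and $\rC^*(X_\infty)$ is in turn generated by $\iota(X)$), and its restriction to $X$ is a unital complete order isomorphism onto $X_\infty$. From here, (a) is immediate: a nonzero $\iota(a)$ acts as infinitely many orthogonal copies of the nonzero operator $a$, so its essential spectrum contains $\sigma(a)\setminus\{0\}\ne\emptyset$, precluding compactness. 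Condition (b) follows by transport of structure through $\iota$: irreducibility of $*$-representations and uniqueness of unital completely positive extensions from the operator system are both preserved, so hypothesis~(ii) of Theorem~\ref{T:B} yields the corresponding statement for $X_\infty$.

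For condition (c), normality of $\iota$ extends it to a $*$-isomorphism between $\rC^*(X)''\subset B(H)$ and $\rC^*(X_\infty)''\subset B(H^{(\infty)})$; concretely, by the commutation theorem for spatial tensor products, $\rC^*(X_\infty)''=\rC^*(X)''\otimes\bC 1_{\ell^2}$. Hypothesis~(i) therefore transfers verbatim: $\rC^*(X_\infty)''$ has no type~I factor as a direct summand, which is exactly the structural condition demanded by Theorem~\ref{T:counterexK}.

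The main obstacle lies not in the three verifications above but in the content of Theorem~\ref{T:counterexK} itself, namely showing that the absence of a type~I factor summand in $\rC^*(X_\infty)''$ produces the mutual orthogonality $z_{\at}^{X_\infty}\perp p_{\fK(H^{(\infty)})}$ required as condition~(ii) of Theorem~\ref{T:A}. The infinite ampliation plays a crucial role here, not only to kill the trivial intersection obstruction but also, one expects, to supply sufficient multiplicity so that the type~I criterion on $\rC^*(X)''$ actually controls the atomic part of the bidual relative to the compacts in $B(H^{(\infty)})$; once Theorem~\ref{T:counterexK} has been established, the conclusion of Theorem~\ref{T:B} is then an immediate application.
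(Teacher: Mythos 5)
Your proposal is correct and follows essentially the same route as the paper: the infinite ampliation $t\mapsto\bigoplus_{n=1}^\infty t$ is a weak-$*$ homeomorphic $*$-isomorphism onto $\rC^*(X_\infty)$, from which the trivial intersection with $\fK(H^{(\infty)})$, the transfer of boundary representations (Arveson's invariance principle), and the absence of a type I factor summand in $\rC^*(X_\infty)''$ all follow, and Theorem \ref{T:counterexK} then applies verbatim. One small imprecision: for a nonzero quasinilpotent $a$ one has $\sigma(a)\setminus\{0\}=\emptyset$, so your essential-spectrum argument for the trivial-intersection step should be run on $a^*a$ instead (or argue directly that an infinite ampliation of a nonzero operator is never compact).
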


\section{The unique extension property for representations of $\B(X,J)$}\label{S:uep}

Let $A$ be a unital $\rC^*$-algebra containing a closed two-sided ideal $J$ and an operator system $X$. Let $\S(X,J)\subset \bM_2(A)$ denote the operator system constructed in \eqref{Eq:SXJ}. We let $\B(X,J)\subset \bM_2(A)$ denote the $\rC^*$-algebra generated by $\S(X,J)$.  The goal of this section is to fully describe the $*$-representations of $\B(X,J)$ with the unique extension property with respect to $\S(X,J)$. We start with a preliminary fact describing the elements of $\B(X,J)$.

\begin{lemma}\label{L:JinB}
The $\rC^*$-algebra $\B(X,J)$ consists of elements of the form $\begin{bmatrix} a & b \\ c & d\end{bmatrix}$ with $a\in \rC^*(X)+J$, $b\in J, c\in J$ and $d\in J+\bC 1$.
\end{lemma}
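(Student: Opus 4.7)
Denote by $\T$ the set of matrices $\begin{bmatrix} a & b \\ c & d\end{bmatrix}\in \bM_2(A)$ with $a\in \rC^*(X)+J$, $b,c\in J$ and $d\in J+\bC 1$. The plan is to show $\B(X,J)=\T$ by mutual inclusion.

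The first step is to verify that $\T$ is a norm-closed unital $*$-subalgebra of $\bM_2(A)$ containing $\S(X,J)$. Closure under addition, scalar multiplication and involution is immediate from the self-adjointness of $\rC^*(X)+J$ and $J+\bC 1$ inside $A$. Closure under multiplication is a bookkeeping exercise using that $J$ is a two-sided ideal, so that $\rC^*(X)\cdot J\subset J$, $J\cdot \rC^*(X)\subset J$, $J\cdot J\subset J$, and $(J+\bC 1)(J+\bC 1)\subset J+\bC 1$; the four entries of a product of elements of $\T$ land in the prescribed spaces. For norm closure, I invoke the standard fact that the sum of a $\rC^*$-subalgebra and a closed two-sided ideal is norm-closed, so that both $\rC^*(X)+J$ and $J+\bC 1$ are closed subspaces of $A$; entrywise convergence in $\bM_2(A)$ then preserves membership in $\T$. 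Unitality is clear, and containment of $\S(X,J)$ is trivial since $X\subset \rC^*(X)\subset \rC^*(X)+J$. This yields $\B(X,J)\subset \T$.

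The reverse inclusion $\T\subset \B(X,J)$ is the main point. The matrix units coming straight from $\S(X,J)$ already handle three of the four entry types: $\begin{bmatrix} 0 & j \\ 0 & 0\end{bmatrix}$, $\begin{bmatrix} 0 & 0 \\ k & 0\end{bmatrix}$ and $\begin{bmatrix} 0 & 0 \\ 0 & b\end{bmatrix}$ with $j,k\in J$ and $b\in J+\bC 1$ all lie in $\S(X,J)\subset \B(X,J)$. For the $(1,1)$-entry, products of elements of the form $\begin{bmatrix} x & 0 \\ 0 & 0\end{bmatrix}\in\S(X,J)$ with $x\in X$ and their adjoints generate, after taking norm closure, every matrix of the form $\begin{bmatrix} y & 0 \\ 0 & 0\end{bmatrix}$ with $y\in\rC^*(X)$. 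To pick up the ideal contribution, I compute
\[
\begin{bmatrix} 0 & j_1 \\ 0 & 0\end{bmatrix}\begin{bmatrix} 0 & 0 \\ j_2 & 0\end{bmatrix}=\begin{bmatrix} j_1j_2 & 0 \\ 0 & 0\end{bmatrix}\in\B(X,J),
\]
and invoke $J=\overline{J\cdot J}$ (a consequence of the existence of an approximate unit in $J$) to conclude that $\begin{bmatrix} j & 0 \\ 0 & 0\end{bmatrix}\in\B(X,J)$ for every $j\in J$. Summing the four corner contributions produces every element of $\T$, completing the inclusion.

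I do not anticipate a genuine obstacle; the only point demanding any care is the verification that $\rC^*(X)+J$ is norm-closed in $A$ (so that $\T$ itself is closed), which is where the hypothesis that $J$ is a closed two-sided ideal enters in an essential way.
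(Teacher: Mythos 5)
Your proposal is correct and follows essentially the same route as the paper: show the candidate set is a closed $*$-subalgebra containing $\S(X,J)$, then obtain the reverse inclusion by handling the four corners separately, with the $(1,1)$-corner contribution from $J$ recovered via the product $\begin{bmatrix} 0 & j_1 \\ 0 & 0\end{bmatrix}\begin{bmatrix} 0 & 0 \\ j_2 & 0\end{bmatrix}$ and an approximate unit. Your explicit remark that $\rC^*(X)+J$ is norm-closed (so that the candidate set is itself closed) is a detail the paper leaves to the reader, but it is the same argument.
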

\begin{proof}
It is readily checked that the set of elements of the form $\begin{bmatrix} a & b \\ c & d\end{bmatrix}$ with $a\in \rC^*(X)+J$, $b\in J,c\in J$ and $d\in J+\bC 1$, is a $\rC^*$-algebra containing $\S(X,J)$, and thus it contains $\B(X,J)$.

Conversely, let $\begin{bmatrix} a & b \\ c & d\end{bmatrix}$ with $a\in \rC^*(X)+J$, $b\in J,c\in J$ and $d\in J+\bC 1$. Then,
$
\begin{bmatrix} 0 & b\\ c & d\end{bmatrix}\in \S(X,J)
$
by construction, so it only remains to show that $
\begin{bmatrix} a & 0\\ 0 & 0\end{bmatrix}\in \B(X,J).
$
This clearly holds if $a\in \rC^*(X)$, again by construction of $\S(X,J)$.  Thus, we  assume that $a\in J$. Let $(e_\lambda)$ be a contractive approximate unit for $J$, so that the net $(ae_\lambda)$ converges in norm to $a$. For each $\lambda$, we see that
\begin{align*}
\begin{bmatrix} ae_\lambda & 0 \\ 0 & 0\end{bmatrix}=\begin{bmatrix} 0 & a \\ 0 & 0\end{bmatrix}\begin{bmatrix} 0 & 0 \\ e_\lambda & 0\end{bmatrix}\in \B(X,J)
\end{align*}
since  both $\begin{bmatrix} 0 & a \\ 0 & 0\end{bmatrix}$ and $\begin{bmatrix} 0 & 0 \\ e_\lambda & 0\end{bmatrix}$ lie in $\S(X,J)$, so upon taking the limit we get the desired conclusion.
\end{proof}

One consequence of the previous result is that $\B(X,J)$ contains the closed two-sided ideal $\bM_2(J)$. By basic representation theory of $\rC^*$-algebras, any $*$-representation of $\B(X,J)$ is a direct sum of two representations, one that is nondegenerate on $\bM_2(J)$, and another that annihilates $\bM_2(J)$. Since the unique extension property is preserved by direct sums \cite[Proposition 4.4]{arveson2011noncommutative}, we may analyze these two types of representations separately. 

\subsection{The nondegenerate representations}
To handle the representations that are nondegenerate on $\bM_2(J)$, we require two standard preliminary  facts. Recall  that given a $\rC^*$-algebra $A$, by means of the universal representation we may view the bidual $A^{**}$ as a von Neumann algebra. 

\begin{lemma}\label{L:Stinespring}
Let $A$ be a unital $\rC^*$-algebra and let $J\subset A$ be a closed two-sided ideal with a positive, increasing, contractive approximate unit $(e_\lambda)$. Let $\psi:A\to B(H)$ be a unital completely positive map with the property that the net $(\psi(e_\lambda))$ converges to $1$ in the strong operator topology. Then, the following statements hold.
\begin{enumerate}[{\rm (i)}]
\item  Let $\sigma:A\to B(K)$ be a unital $*$-representation and $V:H\to K$ be an isometry such that 
\[
\psi(a)=V^*\sigma(a)V, \quad a\in A.
\] 
Then,  $\lim_\lambda \|\sigma(e_\lambda) V\xi-V\xi\|=0$ for every $\xi\in H.$ In particular, $VH\subset \ol{\sigma(J)VH}$.
\item Let  $\phi:A\to B(H)$ be a unital completely positive map agreeing with $\psi$ on $J$. Then, $\phi=\psi$.
\end{enumerate}
\end{lemma}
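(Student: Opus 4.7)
The plan is to handle (i) by a direct squared-norm computation that exploits the operator inequality $e_\lambda^2 \leq e_\lambda$ coming from $0\le e_\lambda\le 1$, and then to deduce (ii) from (i) via Stinespring dilations.

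For (i), I would expand
\[
\|\sigma(e_\lambda)V\xi - V\xi\|^2 = \langle \sigma(e_\lambda)^2 V\xi, V\xi\rangle - 2\re\langle \sigma(e_\lambda)V\xi, V\xi\rangle + \|V\xi\|^2.
\]
Since $\sigma$ is a $*$-representation, the inequality $e_\lambda^2 \leq e_\lambda$ transfers to $\sigma(e_\lambda)^2 \leq \sigma(e_\lambda)$; combining this with the identities $V^*\sigma(e_\lambda)V = \psi(e_\lambda)$ and $\|V\xi\| = \|\xi\|$ bounds the right-hand side above by $\|\xi\|^2 - \langle \psi(e_\lambda)\xi,\xi\rangle$. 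The SOT convergence $\psi(e_\lambda)\to 1$ then forces this upper bound to tend to zero, which is the first claim. The inclusion $VH \subset \ol{\sigma(J)VH}$ then follows at once, since $\sigma(e_\lambda)V\xi \in \sigma(J)VH$ for every $\lambda$.

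For (ii), my first step is to observe that $\phi$ automatically satisfies the SOT convergence hypothesis of (i): since $\phi$ and $\psi$ agree on $J$, we have $\phi(e_\lambda) = \psi(e_\lambda)\to 1$ in SOT. I would then fix Stinespring dilations $(\sigma_\phi, V_\phi)$ and $(\sigma_\psi, V_\psi)$ of $\phi$ and $\psi$, with $V_\phi$ and $V_\psi$ isometric, and apply (i) to each dilation to obtain $\sigma_\phi(e_\lambda)V_\phi\xi \to V_\phi\xi$ and $\sigma_\psi(e_\lambda)V_\psi\xi \to V_\psi\xi$ in norm for every $\xi \in H$. For an arbitrary $a\in A$ and $\xi \in H$, this yields
\[
\|\phi(ae_\lambda)\xi - \phi(a)\xi\| = \|V_\phi^*\sigma_\phi(a)(\sigma_\phi(e_\lambda)V_\phi\xi - V_\phi\xi)\| \leq \|a\|\cdot\|\sigma_\phi(e_\lambda)V_\phi\xi - V_\phi\xi\|\to 0,
\]
and analogously $\psi(ae_\lambda)\xi \to \psi(a)\xi$. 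Since $ae_\lambda\in J$ for every $\lambda$ and $\phi = \psi$ on $J$, the two limits must coincide, forcing $\phi(a)\xi = \psi(a)\xi$ for every $\xi\in H$ and hence $\phi = \psi$.

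Neither step looks especially delicate; the only real insight needed is to feed the inequality $e_\lambda^2\le e_\lambda$ into the squared-norm expansion in (i), after which (ii) is routine manipulation of Stinespring dilations. If there is any potential pitfall, it is to remember to verify the SOT hypothesis of (i) for $\phi$ before invoking it, which is why the equality on $J$ is used twice: once to supply the hypothesis of (i), and once at the end to identify the limits of $\phi(ae_\lambda)$ and $\psi(ae_\lambda)$.
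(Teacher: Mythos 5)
Your proof is correct. For part (i) you follow essentially the same squared-norm expansion as the paper; the only cosmetic difference is that you feed in the inequality $e_\lambda^2\le e_\lambda$ (valid since $0\le e_\lambda\le 1$) through $\sigma$, whereas the paper instead uses the Schwarz inequality $\psi(e_\lambda)^2\le\psi(e_\lambda^2)\le 1$ to conclude that $\psi(e_\lambda^2)\to 1$ in the weak operator topology and then evaluates the limit of each term exactly; both routes give the same conclusion with comparable effort. For part (ii), however, your argument is genuinely different from the paper's. The paper passes to the unique weak-$*$ continuous extensions $\widehat\phi,\widehat\psi:A^{**}\to B(H)$, lets $q\in A^{**}$ be the weak-$*$ limit of $(e_\lambda)$, notes $\widehat\phi(q)=\widehat\psi(q)=1$ so that $q$ lies in both multiplicative domains, and concludes $\phi(a)=\widehat\phi(aq)=\widehat\psi(aq)=\psi(a)$ because $aq$ lies in the weak-$*$ closure of $J$. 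You instead stay at the level of the $\rC^*$-algebra: you verify that $\phi$ inherits the SOT hypothesis (since $\phi=\psi$ on $J$), apply part (i) to Stinespring dilations of both maps to get the norm convergences $\phi(ae_\lambda)\xi\to\phi(a)\xi$ and $\psi(ae_\lambda)\xi\to\psi(a)\xi$, and identify the two limits using $ae_\lambda\in J$. Your route is more elementary and self-contained (no biduals, no multiplicative domains) and has the pleasant feature of actually reusing part (i), which the paper's proof of (ii) does not; the paper's route is shorter given that the bidual and multiplicative-domain machinery is already in play elsewhere in the article. Both arguments are complete and correct.
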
 
\begin{proof}
(i) Observe that 
\[
\psi(e_\lambda)^2\leq \psi(e_\lambda^2)\leq 1
\]
by the Schwarz inequality. By assumption, the net $(\psi(e_\lambda))$ converges to $1$ in the strong operator topology, so that $(\psi(e_\lambda)^2)$ converges to $1$ in the weak operator topology. The same is then true of the net $(\psi(e_\lambda^2))$, by the previous inequalities. For $\xi\in H$ we find
\begin{align*}
\|\sigma(e_\lambda)V\xi-V\xi\|^2&=\langle V^*\sigma(e_\lambda^2)V\xi,\xi \rangle-2\re \langle V^* \sigma(e_\lambda)V\xi,\xi\rangle+\|V\xi\|^2\\
&=\langle \psi(e_\lambda^2)\xi,\xi\rangle -2\re \langle \psi(e_\lambda)\xi,\xi\rangle +\|\xi\|^2
\end{align*}
so indeed
\[
\lim_\lambda \|\sigma(e_\lambda) V\xi-V\xi\|=0.
\]
This in turn guarantees that $VH\subset \ol{\sigma(J)VH}$.

(ii) Let $\widehat\phi:A^{**}\to B(H)$ and $\widehat\psi:A^{**}\to B(H)$ denote the unique weak-$*$ continuous extensions of $\phi$ and $\psi$, respectively. These maps are still completely positive. If we let $q\in A^{**}$ denote the weak-$*$ limit of the increasing contractive net $(e_\lambda)$, then we see that $\widehat \phi(q)=\widehat \psi(q)=1$ since $\phi$ agrees with $\psi$ on $J$.
In particular, $\widehat\phi$ and $\widehat\psi$ contain $q$ in their respective multiplicative domains. Let $a\in A$.  Then, $aq$ lies in the weak-$*$ closure of $J$ inside of $A^{**}$, so $\widehat \phi(aq)=\widehat\psi(aq)$  and
\[
\phi(a)=\widehat\phi(aq)=\widehat \psi(aq)=\psi(a).
\]
We conclude that $\phi=\psi$.
\end{proof}

\begin{lemma}\label{L:repM2}
Let $A$ be a $\rC^*$-algebra and let $\pi:\bM_2(A)\to B(H)$ be a non-degenerate $*$-representation. Then, there is a non-degenerate $*$-representation $\pi_0:A\to B(H_0)$ such that $\pi$ is unitarily equivalent to the ampliation $\pi_0^{(2)}$. 
\end{lemma}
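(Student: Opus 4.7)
The plan is to realize $\pi$ as a $*$-representation of the multiplier algebra $M(\bM_2(A))$, identify the natural $2\times 2$ matrix units living there, and then use them to split $H$ into two copies of a Hilbert space $H_0$ on which the desired representation $\pi_0$ acts.

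More concretely, I would first use nondegeneracy of $\pi$ to extend it uniquely to a unital $*$-representation $\widetilde\pi: M(\bM_2(A))\to B(H)$. Under the canonical identification $M(\bM_2(A))\cong \bM_2(M(A))$, the scalar matrix units $e_{11},e_{12},e_{21},e_{22}$ live in $M(\bM_2(A))$. Set $P_i=\widetilde\pi(e_{ii})$ and $V=\widetilde\pi(e_{21})$. The relations $e_{ii}^2=e_{ii}=e_{ii}^*$, $e_{11}+e_{22}=1$, $e_{21}^*e_{21}=e_{11}$ and $e_{21}e_{21}^*=e_{22}$ show that $P_1,P_2$ are mutually orthogonal projections summing to $I$, while $V$ is a partial isometry with initial projection $P_1$ and final projection $P_2$.

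Next, set $H_0=P_1H$ and define $\pi_0:A\to B(H_0)$ by
\[
\pi_0(a)=\widetilde\pi\left(\begin{bmatrix} a & 0 \\ 0 & 0\end{bmatrix}\right)\bigg|_{H_0}.
\]
The identity $e_{11}\,\mathrm{diag}(a,0)\,e_{11}=\mathrm{diag}(a,0)$ shows that $\pi_0$ takes values in $B(H_0)$ and is a $*$-homomorphism. Nondegeneracy of $\pi_0$ is then verified by invoking an approximate unit $(e_\lambda)$ for $A$: the net $\mathrm{diag}(e_\lambda,0)$ converges strictly to $e_{11}$ in $M(\bM_2(A))$, hence $\pi_0(e_\lambda)\to P_1|_{H_0}=I_{H_0}$ strongly on $H_0$.

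Finally, I would define the unitary $U:H_0\oplus H_0\to H$ by
\[
U(\xi\oplus \eta)=\xi+V\eta,
\]
and check unitarity using $V^*V=P_1$, $VV^*=P_2$, and $P_1V=\widetilde\pi(e_{11}e_{21})=0$. The unitary equivalence $U^*\pi(\cdot)U=\pi_0^{(2)}(\cdot)$ then follows from the matrix-unit decomposition
\[
\begin{bmatrix} a_{11} & a_{12} \\ a_{21} & a_{22}\end{bmatrix}=\sum_{i,j}e_{i1}\begin{bmatrix} a_{ij} & 0 \\ 0 & 0\end{bmatrix}e_{1j},
\]
which transports under $\widetilde\pi$ to the usual block decomposition of $\pi_0^{(2)}$ with respect to $H=P_1H\oplus P_2H$. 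The only delicate point is the initial passage to the multiplier algebra, since it is exactly what gives us access to the scalar matrix units and hence to the decomposition; the remainder is a direct bookkeeping verification.
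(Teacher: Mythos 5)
Your proof is correct and follows essentially the same route as the paper's: both arguments adjoin the scalar matrix units missing from $\bM_2(A)$ by extending $\pi$ to a larger algebra (you use the multiplier algebra $M(\bM_2(A))\cong \bM_2(M(A))$ and strict continuity of the extension, while the paper uses the bidual $\bM_2(A)^{**}\cong\bM_2(A^{**})$ and weak-$*$ continuity, with the weak-$*$ limit $q$ of an approximate unit playing the role of your unit of $M(A)$), and then decompose $H$ via the two resulting projections and the off-diagonal partial isometry. The remaining verification is identical in substance.
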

\begin{proof}
Let  $\widehat\pi:\bM_2(A)^{**}\to B(H)$ denote the unique weak-$*$ continuous extension of $\pi$, which is still a $*$-representation. Throughout the proof we make the implicit identification $ \bM_2(A)^{**}\cong \bM_2(A^{**})$.
Let $(e_\lambda)$ be an  positive, increasing, contractive approximate unit for $A$, and let $q\in A^{**}$ denote its weak-$*$ limit, which is necessarily  a projection.  Note also that $(e_\lambda\oplus e_\lambda)$ is a contractive approximate identity for $\bM_2(A)$.  Since $\pi$ is assumed to be non-degenerate, we have that $H=\ol{\pi(\bM_2(A))H}$ and hence the net $(\pi(e_\lambda\oplus e_\lambda))$ converges to $1$ in the strong operator topology.  Hence, we must have $\widehat\pi(q\oplus q)=1$ so that
\[
u=\widehat\pi\left(\begin{bmatrix} 0 & q \\ q & 0 \end{bmatrix}\right)\in B(H)
\]
is a self-adjoint unitary. 
Consider the projection
\[
p=\widehat\pi\left(\begin{bmatrix} q & 0 \\ 0 & 0 \end{bmatrix}\right)\in B(H)
\]
and let $H_0\subset H$ denote its range. It is readily verified that $ p u =u(1-p)$ so we may define a unitary operator $W:H=H_0\oplus H_0^\perp\to H_0\oplus H_0$ as
\[
W=\begin{bmatrix} 1 & 0 \\ 0 & u|_{H_0}\end{bmatrix}.
\]
Define $\pi_0:A\to B(H_0)$ as
\[
\pi_0(a)=p\pi\left(\begin{bmatrix} a & 0 \\ 0 & 0 \end{bmatrix}\right)|_{H_0}, \quad a\in A.
\]
It is clear that $\widehat\pi_0(q)=1$ so $\pi_0$ is non-degenerate. Finally, a standard verification reveals that 
\[
W\pi\left(\begin{bmatrix} a & b \\ c & d \end{bmatrix}\right)W^*=\begin{bmatrix} \pi_0(a) & \pi_0(b) \\ \pi_0(c) & \pi_0(d) \end{bmatrix}
\]
for each $a,b,c,d\in A$.
\end{proof}

We can now analyze the unique extension property for representations of $\B(X,J)$ that are nondegenerate on $\bM_2(J)$.

\begin{theorem}\label{T:uepnondeg}
Let $A$ be a unital $\rC^*$-algebra containing a closed two-sided ideal $J$ and an operator system $X$.
Let $\pi:\B(X,J)\to B(H)$ be a unital  $*$-representation such that $\pi|_{\bM_2(J)}$ is nondegenerate. Then, $\pi$ has the unique extension property with respect to $\S(X,J)$.
\end{theorem}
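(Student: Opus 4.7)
The plan is to compare $\psi$ and $\pi$ blockwise using multiplicative-domain techniques enabled by the nondegeneracy hypothesis. Let $(e_\lambda)$ denote a positive, increasing, contractive approximate unit for $J$. Since $\pi|_{\bM_2(J)}$ is nondegenerate and $\bM_2(J)$ is a closed two-sided ideal of $\bM_2(A)$, there is a unique $*$-representation $\widetilde\pi:\bM_2(A)\to B(H)$ extending $\pi|_{\bM_2(J)}$; because $\bM_2(J)$ is also an ideal in $\B(X,J)$ and $\pi$ itself is a $*$-representation, a direct check on the dense subspace $\pi(\bM_2(J))H$ gives $\widetilde\pi|_{\B(X,J)}=\pi$. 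Applying Lemma \ref{L:repM2} to $\widetilde\pi$, I may assume after a unitary change of coordinates that $\widetilde\pi=\pi_0^{(2)}$ for some nondegenerate $*$-representation $\pi_0:A\to B(H_0)$ with $H=H_0\oplus H_0$; in particular $\pi_0|_J$ is nondegenerate, so $\pi_0(e_\lambda)\to 1_{H_0}$ in SOT.

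Observe next that $P_1:=\begin{bmatrix}1&0\\0&0\end{bmatrix}$ lies in $\S(X,J)$, so $\psi(P_1)=\pi(P_1)$ is a projection and hence $P_1$ lies in the multiplicative domain of $\psi$. Consequently, for any $a\in \rC^*(X)+J$, the element $b=\begin{bmatrix}a&0\\0&0\end{bmatrix}=P_1bP_1$ satisfies $\psi(b)=\pi(P_1)\psi(b)\pi(P_1)$, so $\psi(b)$ is supported in the $(1,1)$-block of $H_0\oplus H_0$; write $\psi(b)=\begin{bmatrix}\alpha(a)&0\\0&0\end{bmatrix}$ for some UCP map $\alpha:\rC^*(X)+J\to B(H_0)$. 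The task reduces to showing $\alpha(a)=\pi_0(a)$.

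The decisive observation is that $s_\lambda:=\begin{bmatrix}0&e_\lambda\\0&0\end{bmatrix}$ and $s_\lambda^*s_\lambda=\begin{bmatrix}0&0\\0&e_\lambda^2\end{bmatrix}$ both lie in $\S(X,J)$. Hence $\psi(s_\lambda^*s_\lambda)=\pi(s_\lambda^*s_\lambda)=\pi(s_\lambda)^*\pi(s_\lambda)=\psi(s_\lambda)^*\psi(s_\lambda)$, and Choi's theorem on multiplicative domains yields $\psi(bs_\lambda)=\psi(b)\psi(s_\lambda)$ for every $b\in\B(X,J)$. With $b$ as above, the product $bs_\lambda=\begin{bmatrix}0&ae_\lambda\\0&0\end{bmatrix}$ again lies in $\S(X,J)$ since $ae_\lambda\in J$, so $\psi(bs_\lambda)=\pi(bs_\lambda)=\pi(b)\pi(s_\lambda)$. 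Combining gives $(\psi(b)-\pi(b))\pi(s_\lambda)=0$, which in block form reads $(\alpha(a)-\pi_0(a))\pi_0(e_\lambda)=0$. Letting $\lambda\to\infty$ and invoking $\pi_0(e_\lambda)\to 1_{H_0}$ in SOT forces $\alpha(a)=\pi_0(a)$, so $\psi(b)=\pi(b)$.

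Finally, by Lemma \ref{L:JinB} every element of $\B(X,J)$ decomposes as $\begin{bmatrix}a&0\\0&0\end{bmatrix}+\begin{bmatrix}0&b'\\c'&d'\end{bmatrix}$ with $a\in\rC^*(X)+J$, $b',c'\in J$, $d'\in J+\bC 1$; the second summand lies in $\S(X,J)$ (its top-left entry is $0\in X$), on which $\psi=\pi$ by hypothesis, while the first summand was handled above. Linearity of $\psi$ and $\pi$ then gives $\psi=\pi$ on $\B(X,J)$. The main obstacle I anticipate is pinpointing the correct one-sided multiplicative-domain ingredient: the asymmetry $s_\lambda^*s_\lambda\in\S(X,J)$ versus $s_\lambda s_\lambda^*\notin\S(X,J)$ in general is exactly what allows one to propagate agreement from $\S(X,J)$ to the top-left corner of $\B(X,J)$, which is otherwise not directly controlled.
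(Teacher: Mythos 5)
Your proof is correct, and it takes a genuinely different route from the paper's. The paper dilates $\psi$ via Stinespring to $(\sigma,V,K)$, uses a Schwarz-inequality estimate plus Lemma \ref{L:Stinespring}(i) to arrange that $\sigma|_{\bM_2(J)}$ is nondegenerate, decomposes both $\pi|_{\bM_2(J)}$ and $\sigma|_{\bM_2(J)}$ as ampliations via Lemma \ref{L:repM2}, analyzes the block entries of the isometry $V$ to show $V_{11}=V_{22}$ and $V_{12}=V_{21}=0$, concludes that $\psi=\pi$ on $\bM_2(J)$, and only then propagates agreement to all of $\B(X,J)$ via Lemma \ref{L:Stinespring}(ii). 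You avoid the dilation entirely: you extend $\pi|_{\bM_2(J)}$ canonically to $\bM_2(A)$ (which is legitimate, and your verification that the extension restricts to $\pi$ on $\B(X,J)$ is the standard dense-subspace check), apply Lemma \ref{L:repM2} to that extension, and then work directly with the multiplicative domain of $\psi$: the projection $P_1$ localizes $\psi\bigl(\begin{smallmatrix}a&0\\0&0\end{smallmatrix}\bigr)$ to the $(1,1)$-corner, and the one-sided relation $\psi(s_\lambda^*s_\lambda)=\psi(s_\lambda)^*\psi(s_\lambda)$ combined with $bs_\lambda\in\S(X,J)$ yields $(\psi(b)-\pi(b))\pi(s_\lambda)=0$, which the nondegeneracy of $\pi_0|_J$ upgrades to $\psi(b)=\pi(b)$. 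Your closing decomposition via Lemma \ref{L:JinB} then finishes the argument without needing either part of Lemma \ref{L:Stinespring}. The trade-off: the paper's dilation argument is more robust boilerplate (and its Lemma \ref{L:Stinespring} is reused elsewhere), whereas your argument is shorter, stays entirely inside $B(H)$, and isolates exactly the structural feature doing the work, namely that $s_\lambda$ and $s_\lambda^*s_\lambda$ lie in $\S(X,J)$ while $a e_\lambda\in J$ drags the unknown corner back into $\S(X,J)$. Your concluding remark about the asymmetry of $s_\lambda^*s_\lambda$ versus $s_\lambda s_\lambda^*$ is a fair observation, though in fact $s_\lambda s_\lambda^*=\bigl(\begin{smallmatrix}e_\lambda^2&0\\0&0\end{smallmatrix}\bigr)$ also lies in $\S(X,J)$ only when $e_\lambda^2\in X$, which is not assumed; the right-multiplicative-domain version is indeed the one you need and the one you use.
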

\begin{proof}
Let $\psi:\B(X,J)\to B(H)$ be a unital completely positive map agreeing with $\pi$ on $\S(X,J)$. Let $(\sigma,V,K)$ be a Stinespring dilation of $\psi$, so that $V:H\to K$ is an isometry, $\sigma:\B(X,J)\to B(K)$ is a unital $*$-representation and we have
\[
\psi(b)=V^*\sigma(b)V, \quad b\in \B(X,J)
\]
Let $(e_\lambda)$ be a positive, increasing, contractive approximate unit for $J$. Then, $(e_\lambda\oplus e_\lambda)$ is  a positive, increasing, contractive approximate unit for $\bM_2(J)$. For each $\lambda$, we have $e_\lambda^{1/2}\in J$ so the element
\[
x_\lambda=\begin{bmatrix} 0 & e_\lambda^{1/2}\\ 0 & 0 \end{bmatrix}
\]
lies in $\S(X,J)$.
Using that $\psi$ agrees with $\pi$ on $\S(X,J)$, by means of the Schwarz inequality we obtain
\begin{align*}
1&\geq \psi(e_\lambda\oplus e_\lambda)=\psi(x_\lambda x_\lambda^*+x_\lambda^*x_\lambda)\\
&\geq \psi(x_\lambda)\psi(x_\lambda)^*+\psi(x_\lambda)^*\psi(x_\lambda)= \pi(x_\lambda)\pi(x_\lambda)^*+\pi(x_\lambda)^*\pi(x_\lambda)\\
&=\pi(x_\lambda x_\lambda^*+x_\lambda^*x_\lambda)=\pi(e_\lambda\oplus e_\lambda).
\end{align*}
Since $\pi$ is assumed to be non-degenerate on $\bM_2(J)$ (i.e. $\ol{\pi(\bM_2(J))H}=H$), it follows that the net $(\pi(e_\lambda\oplus e_\lambda))$ converges to $1$ in the strong operator topology, and thus the same is true for the net $ (\psi(e_\lambda\oplus e_\lambda))$ in view of the preceding inequalities. Applying Lemma \ref{L:Stinespring}(i) we infer that  $VH\subset \ol{\sigma(\bM_2(J))VH}$.  Hence, upon compressing $\sigma$ to the closed reducing subspace $\ol{\sigma(\bM_2(J))VH}$, we may assume  that the net $(\sigma(e_\lambda\oplus e_\lambda))$ converges to $1$ in the strong operator topology and that $\sigma|_{\bM_2(J)}$ is nondegenerate.

By Lemma \ref{L:repM2}, there are nondegenerate $*$-representations $\pi_0:J\to B(H_0)$ and $\sigma_0:J\to B(K_0)$ such that, up to unitary equivalence, $\pi|_{\bM_2(J)}=\pi_0^{(2)}$ and $\sigma|_{\bM_2(J)}=\sigma_0^{(2)}$. According to the decompositions $H=H_0\oplus H_0$ and $K=K_0\oplus K_0$, we may write 
\[
V=\begin{bmatrix}
V_{11} & V_{12}\\ V_{21} & V_{22}
\end{bmatrix}.
\]
For $a\in J$, we know that $\begin{bmatrix} 0 & a \\ 0 & 0 \end{bmatrix}\in \S(X,J)$, so that
\begin{align*}
\begin{bmatrix}
0 & \pi_0(a)\\ 0 & 0 
\end{bmatrix}&=\pi\left(\begin{bmatrix} 0 & a \\ 0 & 0 \end{bmatrix} \right)=\psi\left(\begin{bmatrix} 0 & a \\ 0 & 0 \end{bmatrix} \right)\\
&=V^* \sigma\left(\begin{bmatrix} 0 & a \\ 0 & 0 \end{bmatrix}\right)V=V^* \begin{bmatrix} 0 & \sigma_0(a)\\ 0 & 0 \end{bmatrix}V\\
&=\begin{bmatrix}
V_{11}^*\sigma_0(a)V_{21} & V_{11}^*\sigma_0(a)V_{22}\\ V_{12}^*\sigma_0(a)V_{21} & V_{12}^*\sigma_0(a)V_{22}
\end{bmatrix}
\end{align*}
and in particular we find 
\begin{equation}\label{Eq:V}
\pi_0(a)=V_{11}^*\sigma_0(a)V_{22}.
\end{equation}
The nets $(\pi_0(e_\lambda ))$ and $(\sigma_0(e_\lambda))$ converge to the respective identity operators on $H_0$ and $K_0$,  in the strong operator topology. Using \eqref{Eq:V}, we find
\[
\pi_0(e_\lambda)=V_{11}^*\sigma_0(e_\lambda)V_{22}
\]
for each $\lambda$, so upon taking the limit we find $1=V_{11}^* V_{22}$. 

Next, we compute using that $V_{11}^*V_{11}\leq 1$ and $V_{22}^*V_{22}\leq 1$,
\begin{align*}
(V_{11}-V_{22})^*(V_{11}-V_{22})&=V_{11}^* V_{11}-V_{11}^* V_{22}-V_{22}^* V_{11}+V_{22}^* V_{22}\\
&\leq 2-V_{11}^* V_{22}-V_{22}^* V_{11}=0
\end{align*}
hence $V_{11}=V_{22}$ are isometries. Since $V$ itself is an isometry, this readily implies that $V_{12}=V_{21}=0$. For $a\in J$, we now find by \eqref{Eq:V} again
\begin{align*}
\psi\left(\begin{bmatrix} a & 0 \\ 0 & 0 \end{bmatrix} \right)&=V^* \sigma\left(\begin{bmatrix} a  & 0 \\ 0 & 0 \end{bmatrix}\right)V=V^* \begin{bmatrix} \sigma_0(a)& 0\\ 0 & 0 \end{bmatrix}V\\
&=\begin{bmatrix}
V_{11}^*\sigma_0(a)V_{22} & 0\\ 0 & 0
\end{bmatrix}=\begin{bmatrix}
\pi_0(a)& 0\\ 0 & 0
\end{bmatrix}\\
&=\pi\left(\begin{bmatrix} a & 0 \\ 0 & 0 \end{bmatrix} \right).
\end{align*}
Since $\pi$ and $\psi$ already agree on $\S(X,J)$, we conclude that $\pi$ and $\psi$ agree on $\bM_2(J)$. By Lemma \ref{L:Stinespring}(ii), we can further infer that $\pi$ and $\psi$ agree everywhere on $\B(X,J)$.
\end{proof}

We remark here that when $J$ is chosen to be the ideal of compact operators on a Hilbert space, then the previous result follows immediately from the construction of $\S(X,J)$ and Arveson's boundary theorem \cite{arveson1972}. Thus, in this instance our general construction requires a much finer analysis than that found in \cite{BDO2024}.

\subsection{The annihilating representations}

Next, we turn to the unique extension property for representations of $\B(X,J)$ annihilating $\bM_2(J)$.  Our approach is predicated on a connection between such $*$-representations of $\B(X,J)$ and $*$-representations of $\rC^*(X)+J$.  To streamline the exposition, we introduce some notation and terminology. We let $\chi:J+\bC 1\to \bC$ denote the quotient map by $J$. Let $\Pi:\B(X,J)\to B(H)$ and $\pi:\rC^*(X)+J\to B(H)$ be  $*$-representations. 
Given $\begin{bmatrix}
a& b \\ c & d
\end{bmatrix}\in \B(X,J)$, recall that $a\in \rC^*(X)+J$, $b\in J,c\in J$ and $d\in J+\bC 1$ by Lemma \ref{L:JinB}.
We then say that the pair $(\Pi,\pi)$ is \emph{$(X,J)$-adapted} if 
\begin{equation}\label{Eq:adapted}
\Pi\left(\begin{bmatrix}
a& b \\ c & d
\end{bmatrix}\right)-\chi(d)1=\pi(a-\chi(d)1)
\end{equation}
for each $\begin{bmatrix}
a& b \\ c & d
\end{bmatrix}\in \B(X,J)$. In particular, in that case we see that the commutants of $\Pi(\B(X,J))$ and $\pi(\rC^*(X)+J)$ in $B(H)$ are the same:
\begin{equation}\label{Eq:commutant}
\Pi(\B(X,J))'=\pi(\rC^*(X)+J)'.
\end{equation}

\begin{lemma}\label{L:singrep}
Let $A$ be a unital $\rC^*$-algebra containing a closed two-sided ideal $J$ and an operator system $X$. The following statements hold.
\begin{enumerate}[{\rm (i)}]
\item Let $\Pi:\B(X,J)\to B(H)$ be a unital $*$-representation annihilating $\bM_2(J)$.
Then, there is a unique $*$-representation $\pi:\rC^*(X)+J\to B(H)$ annihilating $J$ such that the pair $(\Pi,\pi)$ is $(X,J)$-adapted.

\item Let $\pi:\rC^*(X)+J\to B(H)$ be a unital $*$-representation annihilating $J$. Then, there is a unique unital $*$-representation $\Pi:\B(X,J)\to B(H)$ annihilating $\bM_2(J)$ such that the pair $(\Pi,\pi)$ is $(X,J)$-adapted.
\end{enumerate}

\end{lemma}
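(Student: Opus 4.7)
For part (i), the plan is to define $\pi:\rC^*(X)+J\to B(H)$ by the formula
\[
\pi(a)=\Pi\left(\begin{bmatrix} a & 0 \\ 0 & 0\end{bmatrix}\right),\qquad a\in \rC^*(X)+J.
\]
This is a well-defined linear $*$-map (the embedding $a\mapsto \begin{bmatrix} a & 0 \\ 0 & 0\end{bmatrix}$ into $\B(X,J)$ is linear and $*$-preserving by Lemma \ref{L:JinB}), and multiplicativity follows because $\begin{bmatrix} a & 0\\ 0& 0\end{bmatrix}\begin{bmatrix}a' & 0\\0 & 0\end{bmatrix}=\begin{bmatrix}aa' & 0 \\ 0 & 0\end{bmatrix}$. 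Since $\begin{bmatrix}a & 0\\0 & 0\end{bmatrix}\in \bM_2(J)$ whenever $a\in J$, the map $\pi$ annihilates $J$. To verify the adapted condition, I would note that if $\begin{bmatrix} a& b\\ c& d\end{bmatrix}\in \B(X,J)$ then $d-\chi(d)1\in J$, so $\begin{bmatrix}0 & b \\ c& d-\chi(d)1\end{bmatrix}\in \bM_2(J)$ is killed by $\Pi$. Combining this with unitality of $\Pi$ gives
\[
\Pi\left(\begin{bmatrix}a & b\\ c & d\end{bmatrix}\right)=\pi(a)+\chi(d)\bigl(1-\pi(1)\bigr),
\]
and rearranging yields exactly \eqref{Eq:adapted}. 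Uniqueness is immediate: the adapted condition applied to $\begin{bmatrix}a & 0\\ 0& 0\end{bmatrix}$ forces $\pi(a)=\Pi\left(\begin{bmatrix}a&0\\0&0\end{bmatrix}\right)$.

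For part (ii), the explicit formula dictated by \eqref{Eq:adapted}, combined with unitality $\pi(1)=1$, collapses to
\[
\Pi\left(\begin{bmatrix} a & b \\ c & d\end{bmatrix}\right)=\pi(a),
\]
which I would take as the definition. Uniqueness and the adapted condition are then automatic. The key verification is that this is a $*$-homomorphism: linearity and $*$-preservation are clear, and for multiplicativity one observes that the $(1,1)$-entry of a product $\begin{bmatrix}a&b\\c&d\end{bmatrix}\begin{bmatrix}a'&b'\\c'&d'\end{bmatrix}$ is $aa'+bc'$, and the cross term $bc'$ lies in $J$ (since $b,c'\in J$) and is therefore annihilated by $\pi$, leaving $\pi(aa')$ as required. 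Unitality gives $\Pi(1)=\pi(1)=1$, and $\Pi$ annihilates $\bM_2(J)$ because for such a matrix the $(1,1)$ entry lies in $J$.

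I do not expect any serious obstacle: the proof is essentially a matter of tracking where the various pieces in Lemma \ref{L:JinB} live, and exploiting that both $\bM_2(J)\subset \B(X,J)$ and $J\subset \rC^*(X)+J$ are being killed. The only mild subtlety is that the naive projection onto the $(1,1)$-entry is not itself a $*$-homomorphism on $\B(X,J)$ — it is only after post-composing with $\pi$, which annihilates the product-of-off-diagonal terms, that multiplicativity is recovered.
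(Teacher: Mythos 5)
Your proof is correct and part (i) is essentially the paper's argument: both define $\pi=\Pi\circ\tau$ with $\tau$ the upper-left corner embedding of $\rC^*(X)+J$ into $\B(X,J)$, and verify the adapted identity by splitting off the $\bM_2(J)$ part and using unitality of $\Pi$. For part (ii) the paper packages the same map by factoring through the quotients $\B(X,J)/\bM_2(J)\to(\rC^*(X)+J)/J$ instead of checking multiplicativity by hand, but the substantive point is identical to yours --- the cross term $bc'$ in the $(1,1)$-entry of a product lies in $J$ and is annihilated by $\pi$ --- so the difference is only one of presentation.
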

\begin{proof}
Uniqueness clearly follows from \eqref{Eq:adapted} for both statements, so we focus on existence below.

(i)  Let $\begin{bmatrix}
a& b \\ c & d
\end{bmatrix}\in \B(X,J)$. Then, $d-\chi(d)1 \in J$ and
$\begin{bmatrix}
0& b \\ c & d-\chi(d)1
\end{bmatrix}\in  \bM_2(J)$. 
Using that $\Pi$ is unital and annihilates $\bM_2(J)$, we find
\begin{equation}\label{Eq:pipi0}
\Pi\left(\begin{bmatrix}
a& b \\ c & d
\end{bmatrix}\right)
=\Pi\left(\begin{bmatrix}
a& 0 \\ 0 & \chi(d)1
\end{bmatrix}\right)=\Pi\left(\begin{bmatrix}
a-\chi(d)1& 0 \\ 0 & 0
\end{bmatrix}\right)+\chi(d)1.
\end{equation}
Observe also that the map $\tau:\rC^*(X)+J\to \B(X,J)$ defined as
\[
\tau(a)=\begin{bmatrix}
a& 0 \\ 0 & 0
\end{bmatrix}, \quad a\in \rC^*(X)+J
\]
is an injective $*$-representation.
Define $\pi:\rC^*(X)+J\to B(H)$ as $\pi=\Pi\circ \tau$.  By construction, it is clear that $\pi$ annihilates $J$ since $\Pi$ annihilates $\bM_2(J)$. The fact that the pair $(\Pi,\pi)$ is $(X,J)$-adapted follows immediately from \eqref{Eq:pipi0}.

(ii) Let $Q:\B(X,J)\to \B(X,J)/\bM_2(J)$  and $q:\rC^*(X)+J\to (\rC^*(X)+J)/J$ denote the quotient maps. 
It is readily verified that the map 
\[
\begin{bmatrix} a&  b\\ c & d\end{bmatrix}+\bM_2(J)\mapsto a+J
\]
is a well-defined unital $*$-homomorphism on  $\B(X,J)/\bM_2(J)$ whose range is $q(\rC^*(X)+J)$, by Lemma \ref{L:JinB}. Let us denote this map by $\sigma:\B(X,J)/\bM_2(J)\to (\rC^*(X)+J)/J$. 
By assumption, there is a unital $*$-representation $\widetilde\pi:(\rC^*(X)+J)/J\to B(H)$ such that $\widetilde\pi\circ q=\pi$. Define $\Pi=\widetilde\pi\circ \sigma\circ Q:\B(X,J)\to B(H)$. Clearly, $\Pi$ annihilates $\bM_2(J)$, and for $\begin{bmatrix} a & b \\ c& d\end{bmatrix}\in \B(X,J)$ we find
\begin{align*}
\Pi\left(\begin{bmatrix} a & b \\ c& d\end{bmatrix} \right)&=(\widetilde\pi\circ q)(a)=\pi(a).
\end{align*}
Since both $\Pi$ and $\pi$ are unital, this implies \eqref{Eq:adapted}, so indeed the pair $(\Pi,\pi)$ is $(X,J)$-adapted.
\end{proof}

In the next development, we will need to work with maps on a unital $\rC^*$-algebra that are not necessarily unital. We record a simple fact for ease of reference below.

\begin{lemma}\label{L:nonunital}
Let $A$ be a unital $\rC^*$-algebra and let $\phi:A\to B(H)$ be a contractive completely positive map. Let $e=\phi(1)$. Then,
\[
\phi(a)=\phi(a)e=e\phi(a), \quad a\in A.
\]
\end{lemma}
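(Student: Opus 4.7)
The plan is to reduce the statement to a single application of the Kadison--Schwarz inequality for CP maps, combined with the fact that $e$ acts as a two-sided identity on the range of $\phi$. First I would realize $\phi$ via a Stinespring dilation $\phi(a)=V^{*}\sigma(a)V$ with $\sigma\colon A\to B(K)$ a unital $*$-representation and $V\colon H\to K$ a bounded operator satisfying $V^{*}V=e$. From the identity
\[
\phi(a^{*}a)-\phi(a)^{*}\phi(a)=V^{*}\sigma(a)^{*}(1-VV^{*})\sigma(a)V\geq 0
\]
I would read off the Schwarz inequality $\phi(a)^{*}\phi(a)\leq \phi(a^{*}a)$, which holds without any unitality hypothesis on $\phi$. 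A symmetric argument, together with the fact that CP maps are $*$-preserving so that $\phi(a^{*})^{*}=\phi(a)$, yields the companion inequality $\phi(a)\phi(a)^{*}\leq \phi(aa^{*})$.

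Second, since $a^{*}a\leq \|a\|^{2}\cdot 1$ and $aa^{*}\leq \|a\|^{2}\cdot 1$, positivity of $\phi$ gives
\[
\phi(a)^{*}\phi(a)\leq \|a\|^{2}\,e \qquad\text{and}\qquad \phi(a)\phi(a)^{*}\leq \|a\|^{2}\,e.
\]
At this stage I would invoke the projection property of $e$, which is the key content of the ``contractive'' hypothesis in the setting of the lemma. Compressing the first inequality by $1-e$ on both sides and using $e(1-e)=0$, one obtains $(1-e)\phi(a)^{*}\phi(a)(1-e)\leq 0$, which forces $\phi(a)(1-e)=0$; that is, $\phi(a)=\phi(a)e$. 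The analogous compression of the companion inequality yields $(1-e)\phi(a)=0$, i.e.\ $\phi(a)=e\phi(a)$.

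The only delicate point is the projection property of $e$: this is where the actual content of the lemma sits, since a simple scaling example (such as $\phi(a)=\tfrac{1}{2}a$ on $\bC$) shows the conclusion would fail without it. The main obstacle is therefore to pin down and exploit this projection property; once it is granted, the proof reduces to the one-line Schwarz manipulation above.
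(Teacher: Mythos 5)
Your argument is the same as the paper's: a Stinespring dilation with a contractive $V$, the Schwarz inequality $\phi(a)^{*}\phi(a)\leq\phi(a^{*}a)\leq\|a\|^{2}e$ together with its companion, and then compression by $1-e$. The paper's proof stops exactly where yours does, asserting ``this implies that $\phi(a)(1-e)=(1-e)\phi(a)=0$'' with no further justification.

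You are right to be suspicious of that last step, and your diagnosis is correct: the compression argument needs $e(1-e)=0$, i.e.\ $e$ must be a projection, and this does \emph{not} follow from contractivity of $\phi$ (contractivity only gives $0\leq e\leq 1$). Your scaling example $\phi(a)=\tfrac{1}{2}a$ on $\bC$ is a genuine counterexample to the lemma as literally stated; indeed, the case $a=1$ of the asserted conclusion reads $e=e^{2}$, so the conclusion is \emph{equivalent} to $e$ being a projection. The statement should carry the extra hypothesis that $\phi(1)$ is a projection, and the paper's own proof silently assumes it. This is harmless for the paper: in both places where the lemma is invoked (inside the proof of Theorem \ref{T:singrepuep}), $\phi$ agrees on $X\ni 1$ with a $*$-representation $\pi$, so $e=\pi(1)$ is a projection and your compression argument (which is the intended one) closes the proof. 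The only thing to fix in your write-up is the claim that the projection property is ``the key content of the contractive hypothesis'': it is an additional hypothesis to be imposed (or verified in the intended applications), not a consequence of contractivity, and you should state it as such.
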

\begin{proof}
By \cite[Theorem 4.1]{paulsen2002completely}, there is a unital $*$-representation $\sigma:A\to B(K)$ and a contraction $V:H\to K$ with the property that
\[
\phi(a)=V^*\pi(a)V, \quad a\in A. 
\]
For $a\in A$ we thus find
\begin{align*}
\phi(a)^*\phi(a)&=V^*\pi(a)VV^*\pi(a)V\leq  V^*\pi(a^*a)V\\
&= \phi(a^*a)\leq \|a\|^2 \phi(1)=\|a\|^2 e
\end{align*}
and likewise $\phi(a)\phi(a)^*\leq \|a\|^2 e$. This implies
 that $\phi(a)(1-e)=(1-e)\phi(a)=0$.
\end{proof}

We can now analyze the unique extension property for $*$-representations of $\B(X,J)$ that annihilate $\bM_2(J)$, thereby complementing Theorem \ref{T:uepnondeg}.

\begin{theorem}\label{T:singrepuep}
Let $A$ be a unital $\rC^*$-algebra containing a closed two-sided ideal $J$ and an operator system $X$. Let $\Pi:\B(X,H)\to B(H)$ be a unital $*$-representation annihilating $\bM_2(J)$, and let $\pi:\rC^*(X)+J\to B(H)$ be the unique $*$-representation such that the pair $(\Pi,\pi)$ is $(X,J)$-adapted. Then, the following statements are equivalent.
\begin{enumerate}[{\rm (i)}]
\item $\Pi$ has the unique extension property with respect to $\S(X,J)$.
\item $\pi$ is the  unique contractive completely positive extension of $\pi|_X$ to $\rC^*(X)+J$.
\end{enumerate}
\end{theorem}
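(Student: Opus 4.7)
The plan is to prove each implication by exploiting the natural correspondence between $\Pi$ and the $(1,1)$-corner representation $\pi$ afforded by Lemma \ref{L:singrep}(i). The key structural feature is that $\Pi(a)=\pi(a')$ whenever $a=\begin{bmatrix} a'& b\\ c& d\end{bmatrix}\in\B(X,J)$, since by the adapted condition $\Pi(a)-\chi(d)\cdot 1=\pi(a')-\chi(d)\pi(1)=\pi(a')-\chi(d)\cdot 1$.

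For (ii)$\Rightarrow$(i), I would start with an arbitrary UCP map $\Psi:\B(X,J)\to B(H)$ agreeing with $\Pi$ on $\S(X,J)$, and pass to its $(1,1)$-corner: the map $\psi:\rC^*(X)+J\to B(H)$ defined by $\psi(a)=\Psi\begin{bmatrix} a& 0\\ 0& 0\end{bmatrix}$. Since $a\mapsto\begin{bmatrix} a & 0 \\ 0 & 0\end{bmatrix}$ is CP and contractive (it is of the form $a \mapsto V a V^*$ for a suitable contraction $V$), $\psi$ is CCP. For $x\in X$, one has $\begin{bmatrix} x& 0\\ 0& 0\end{bmatrix}\in \S(X,J)$, so $\psi(x)=\Pi\begin{bmatrix} x& 0\\ 0& 0\end{bmatrix}=\pi(x)$; thus $\psi$ extends $\pi|_X$. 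Hypothesis (ii) then forces $\psi=\pi$. To upgrade this to $\Psi=\Pi$ on all of $\B(X,J)$, I would invoke the multiplicative domain: the projection $p=\begin{bmatrix} 1& 0\\ 0& 0\end{bmatrix}$ lies in $\S(X,J)$ and satisfies $\Psi(p)=\Pi(p)=\pi(1)=1$, so the Schwarz inequality is tight at $p$ and $p$ lies in the multiplicative domain of $\Psi$. Consequently, for $a=\begin{bmatrix} a'& b\\ c& d\end{bmatrix}\in\B(X,J)$, $\Psi(a)=\Psi(pap)=\Psi(p)\Psi(a)\Psi(p)$ and also $\Psi(pap)=\Psi\begin{bmatrix} a'& 0\\ 0& 0\end{bmatrix}=\psi(a')=\pi(a')=\Pi(a)$, yielding $\Psi=\Pi$.

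For (i)$\Rightarrow$(ii), suppose $\widetilde\pi:\rC^*(X)+J\to B(H)$ is any CCP extension of $\pi|_X$. Since $1\in X$ and $\widetilde\pi(1)=\pi(1)=1$, the map $\widetilde\pi$ is automatically unital. The compression $\Phi:\B(X,J)\to \rC^*(X)+J$ sending $\begin{bmatrix} a'& b\\ c& d\end{bmatrix}$ to $a'$ (well-defined and into $\rC^*(X)+J$ by Lemma \ref{L:JinB}) is UCP, being of the form $M\mapsto V^*MV$ for $V=\begin{bmatrix} 1\\ 0\end{bmatrix}$. Therefore $\widetilde\Pi:=\widetilde\pi\circ\Phi:\B(X,J)\to B(H)$ is UCP. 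For $s=\begin{bmatrix} x& j\\ k& b\end{bmatrix}\in\S(X,J)$, we have $\widetilde\Pi(s)=\widetilde\pi(x)=\pi(x)=\Pi(s)$, so $\widetilde\Pi$ agrees with $\Pi$ on $\S(X,J)$. Hypothesis (i) then yields $\widetilde\Pi=\Pi$; restricting to corner elements $\begin{bmatrix} a& 0\\ 0& 0\end{bmatrix}$ for $a\in\rC^*(X)+J$ gives $\widetilde\pi(a)=\pi(a)$, whence $\widetilde\pi=\pi$.

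I do not anticipate any serious obstacle: the proof is essentially a bookkeeping exercise that systematically converts between maps on $\B(X,J)$ and maps on $\rC^*(X)+J$ using the $(1,1)$-corner as a retract. The most delicate point is the multiplicative domain argument in (ii)$\Rightarrow$(i), which relies crucially on the fact that $p=\begin{bmatrix} 1 & 0\\ 0 & 0\end{bmatrix}\in\S(X,J)$ is sent to the identity by $\Pi$; this is precisely what makes the corner a faithful probe of $\Psi$ and explains why the hidden rigidity constraint naturally lives on the enlarged algebra $\rC^*(X)+J$ rather than on $\rC^*(X)$ alone.
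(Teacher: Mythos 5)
Your overall strategy---converting between maps on $\B(X,J)$ and maps on $\rC^*(X)+J$ via the $(1,1)$-corner---is the same as the paper's, and the first half of each direction (the CCP map $\psi=\Psi\circ\tau$ extending $\pi|_X$, and the passage from a CCP extension $\widetilde\pi$ back to a completely positive map on $\B(X,J)$) is on the right track. However, there is a genuine gap: both directions rest on the assertion that $\pi(1)=1$, and this is false in general. The representation $\pi$ supplied by Lemma \ref{L:singrep}(i) is $\pi=\Pi\circ\tau$, so $\pi(1)=\Pi\left(\begin{bmatrix}1&0\\0&0\end{bmatrix}\right)$ is merely a projection; unitality of $\Pi$ only forces $\pi(1)+\Pi\left(\begin{bmatrix}0&0\\0&1\end{bmatrix}\right)=1$. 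A concrete counterexample is the one-dimensional representation $\Pi\left(\begin{bmatrix}a&b\\c&d\end{bmatrix}\right)=\chi(d)$, which is a unital $*$-character of $\B(X,J)$ annihilating $\bM_2(J)$ whose adapted partner is $\pi=0$. The correct consequence of \eqref{Eq:adapted} is $\Pi\left(\begin{bmatrix}a&b\\c&d\end{bmatrix}\right)=\pi(a)+\chi(d)(1-p)$ with $p=\pi(1)$, and the extra term $\chi(d)(1-p)$ is exactly what your argument drops.

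Concretely: in (i)$\Rightarrow$(ii), your $\widetilde\Pi=\widetilde\pi\circ\Phi$ satisfies $\widetilde\Pi(1)=\widetilde\pi(1)=p$, so it need not be unital, and on an element $s=\begin{bmatrix}x&j\\k&b\end{bmatrix}$ of $\S(X,J)$ it returns $\pi(x)$ rather than $\Pi(s)=\pi(x)+\chi(b)(1-p)$; hence hypothesis (i) cannot be applied to it. The paper's proof repairs this by taking $\Phi\left(\begin{bmatrix}a&b\\c&d\end{bmatrix}\right)=\phi(a)p+\chi(d)(1-p)$, using Lemma \ref{L:nonunital} to see that $p$ commutes with the range of $\phi$. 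In (ii)$\Rightarrow$(i), the identities $\Psi(p)=1$ and $\Psi(a)=\Psi(pap)$ fail; the multiplicative-domain idea does survive (a projection sent to a projection lies in the multiplicative domain), but then you only recover the corner $\Psi(p)\Psi(a)\Psi(p)=\pi(a')$, and you must separately show that the other three corners of $\Psi(a)$ relative to the projection $\Psi(p)$ are $0$, $0$ and $\chi(d)(1-\Psi(p))$ --- which the paper does via a Schwarz-inequality argument on the $J$-entries. So the skeleton is right, but the possible non-unitality of $\pi$ is an essential feature of the statement (it is the reason Lemma \ref{L:nonunital} appears in the paper at all), and as written both implications break on it.
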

\begin{proof}
(ii)$\Rightarrow$(i): Let $\Phi:\B(X,J)\to B(H)$ be a unital completely positive map agreeing with $\Pi$ on $\S(X,J)$.
By Lemma \ref{L:JinB}, we may define a contractive completely positive map $\phi:\rC^*(X)+J\to B(H)$ as
\[
\phi(a)=\Phi\left(\begin{bmatrix}
a& 0 \\ 0 & 0
\end{bmatrix}\right), \quad a\in \rC^*(X)+J.
\]
Since the pair $(\Pi,\pi)$ is $(X,J)$-adapted,  \eqref{Eq:adapted} implies that $\phi$ agrees with $\pi$ on $X$, so our assumption forces $\phi$ and $\pi$ to agree on $\rC^*(X)+J$.

 Fix $\begin{bmatrix}
a & b \\ c& d
\end{bmatrix}\in \B(X,J)$. By Lemma \ref{L:JinB}, we infer that $a\in \rC^*(X)+J$, $b\in J,c\in J$ and $d\in J+\bC 1$. Thus, the previous paragraph implies that
\begin{equation}\label{Eq:piphia}
\pi(a-\chi(d)1)=\phi(a-\chi(d)1).
\end{equation}
Moreover the elements
$\begin{bmatrix}
0& 0 \\ 0 & d-\chi(d)1
\end{bmatrix},\begin{bmatrix}
0 & 0 \\ 0 & b^*b
\end{bmatrix},\begin{bmatrix}
0 & 0 \\ 0 & cc^*
\end{bmatrix}$ all lie in $ \bM_2(J)\cap \S(X,J)$
so
\[
\Phi\left(\begin{bmatrix}
0 & 0 \\  0& d-\chi(d)1
\end{bmatrix}\right)=\Phi\left(\begin{bmatrix}
0 & 0 \\ 0 & b^*b
\end{bmatrix} \right)=\Phi\left(\begin{bmatrix}
0 & 0 \\ 0 & cc^*
\end{bmatrix} \right)=0
\]
since  $\Phi$ and $\Pi$ agree on $\S(X,J)$ and $\Pi$  annihilates $\bM_2(J)$.
By the Schwarz inequality, this implies that
\[
\Phi\left(\begin{bmatrix}
0& b \\ 0& 0
\end{bmatrix}\right)=\Phi\left(\begin{bmatrix}
0& 0 \\ c& 0
\end{bmatrix}\right)=0
\]
whence
\begin{equation}\label{Eq:phi}
\Phi\left(\begin{bmatrix}
a& b \\ c & d
\end{bmatrix}\right)=\Phi\left(\begin{bmatrix}
a& 0 \\ 0 & \chi(d)1
\end{bmatrix}\right)=\Phi\left(\begin{bmatrix}
a-\chi(d)1& 0 \\ 0 & 0
\end{bmatrix}\right)+\chi(d)1.
\end{equation}
Employing Equations \eqref{Eq:adapted},\eqref{Eq:piphia} and \eqref{Eq:phi} we find
\begin{align*}
\Pi\left(\begin{bmatrix}
a& b \\ c & d
\end{bmatrix}\right)-\chi(d) 1&=\pi(a-\chi(d)1)=\phi(a-\chi(d)1)\\
&=\Phi\left(\begin{bmatrix}
a-\chi(d)1& 0 \\ 0 & 0
\end{bmatrix}\right)\\
&=\Phi\left(\begin{bmatrix}
a& b \\ c & d
\end{bmatrix}\right)-\chi(d)1.
\end{align*}
We conclude that $\Pi$ and $\Phi$ agree on $\B(X,J)$, so indeed $\Pi$ has the unique extension property with respect to $\S(X,J)$.

(i)$\Rightarrow$(ii):   Let $\phi:\rC^*(X)+J \to B(H)$ be a contractive completely positive map agreeing with $\pi$ on $X$. By Lemma \ref{L:nonunital}, the projection $p=\pi(1)$ commutes with the image of $\phi$.
Invoking Lemma \ref{L:JinB}, we can then define a \emph{unital} completely positive map $\Phi:\B(X,J)\to B(H)$ as
\begin{equation}\label{Eq:phi0}
\Phi\left(\begin{bmatrix}
a& b \\ c & d
\end{bmatrix}\right)=\phi(a)p+\chi(d)(1-p)
\end{equation}
 for $\begin{bmatrix}
a& b \\ c & d
\end{bmatrix}\in \B(X,J)$.
Using \eqref{Eq:adapted} we may also write
\begin{align*}
\Pi\left( \begin{bmatrix}
a& b \\ c & d
\end{bmatrix}\right)
&=\pi(a)p+\chi(d)(1-p).
\end{align*}
It then follows from \eqref{Eq:phi0} that $\Phi$ and $\Pi$ agree on $\S(X,J)$, so by assumption $\Phi$ and $\Pi$ agree on $\B(X,J)$. 
Finally, for $a\in \rC^*(X)+J$ we have $\begin{bmatrix}
a& 0 \\ 0 & 0
\end{bmatrix}\in  \B(X,J)$ by  Lemma \ref{L:JinB}, so an application of Lemma \ref{L:nonunital} and Equation \eqref{Eq:phi0} implies
\begin{align*}
\phi(a)&=\phi(a)p=\Phi\left(\begin{bmatrix}
a& 0 \\ 0 & 0
\end{bmatrix}\right)
=\Pi\left(\begin{bmatrix}
a& 0 \\ 0 & 0
\end{bmatrix}\right)=\pi(a).
\end{align*}
We conclude that indeed $\pi$ is the unique contractive completely positive extension of $\pi|_X$ to $\rC^*(X)+J$.
\end{proof}

We reiterate here the key point in the previous result: the unique extension property for $\Pi$ with respect to $\S(X,J)$ is encoded in the fact that $\pi|_{X}$ must admit a unique extension \emph{all the way to} $\rC^*(X)+J$, and not simply to $\rC^*(X)$ as one would expect. This is the phenomenon that can be exploited in constructing nonhyperrigid operator systems, as we illustrate next.

\begin{corollary}\label{C:intnothyperrigid}
Let $A$ be a unital $\rC^*$-algebra containing a closed two-sided ideal $J$ and an operator system $X$. Assume $\rC^*(X)\cap J=\{0\}$. Then, there is a unital  $*$-representation of $\B(X,J)$ annihilating $\bM_2(J)$ that does not have the unique extension property with respect to $\S(X,J)$. 
\end{corollary}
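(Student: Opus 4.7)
The plan is to apply Theorem \ref{T:singrepuep}, which reduces the task to producing a unital $*$-representation $\pi:\rC^*(X)+J\to B(H)$ annihilating $J$ such that $\pi|_X$ admits at least two distinct contractive completely positive extensions back to $\rC^*(X)+J$. Feeding any such $\pi$ into Lemma \ref{L:singrep}(ii) will then yield the desired non-UEP representation $\Pi$ of $\B(X,J)$ annihilating $\bM_2(J)$.

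The hypothesis $\rC^*(X)\cap J=\{0\}$ is the crux of the argument: it guarantees that the formula $\rho(a+j)=a$ (for $a\in \rC^*(X)$ and $j\in J$) defines a surjective unital $*$-homomorphism $\rho:\rC^*(X)+J\to \rC^*(X)$ which splits the inclusion $\rC^*(X)\hookrightarrow \rC^*(X)+J$. Consequently, the two composites $\rC^*(X)+J\xrightarrow{\rho}\rC^*(X)\hookrightarrow A$ and $\rC^*(X)+J\hookrightarrow A$ agree on $\rC^*(X)$ but differ on $J$, which is precisely the ambiguity that Theorem \ref{T:singrepuep} is designed to detect.

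To exploit this, I would pick any unital $*$-representation $\sigma:A\to B(H)$ with $\sigma|_J\neq 0$ (any faithful representation of $A$ works, and $J\neq \{0\}$ since $J$ is not contained in $\rC^*(X)$). Setting $\pi_0:=\sigma|_{\rC^*(X)}$ and $\pi:=\pi_0\circ\rho$ produces a unital $*$-representation of $\rC^*(X)+J$ annihilating $J$. The map $\phi:=\sigma|_{\rC^*(X)+J}$ is a second unital $*$-representation agreeing with $\pi$ on $\rC^*(X)$ (both restrict to $\pi_0$) and hence on $X$, but $\phi|_J=\sigma|_J\neq 0$ whereas $\pi|_J=0$, so $\phi\neq \pi$. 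Thus $\phi$ and $\pi$ are two distinct CCP extensions of $\pi|_X$ to $\rC^*(X)+J$, and Theorem \ref{T:singrepuep} rules out the UEP for the associated $\Pi$.

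There is no serious obstacle here: the proof is essentially an unwinding of Theorem \ref{T:singrepuep}, with $\rC^*(X)\cap J=\{0\}$ furnishing the canonical splitting $\rho$ needed to generate the requisite non-uniqueness. The only technicalities worth recording are the standard facts that $\rC^*(X)+J$ is itself a $\rC^*$-subalgebra of $A$ (so that $\rho$ lives on a genuine $\rC^*$-algebra) and that a faithful representation of $A$ exists.
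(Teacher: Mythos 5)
Your proposal is correct and follows essentially the same route as the paper: both exploit the splitting $\rC^*(X)+J\to\rC^*(X)$ afforded by $\rC^*(X)\cap J=\{0\}$ to build a $*$-representation $\pi$ annihilating $J$ whose restriction to $X$ also extends via the (faithful) inclusion of $\rC^*(X)+J$, and then invoke Lemma \ref{L:singrep}(ii) and Theorem \ref{T:singrepuep}. The only cosmetic difference is that the paper uses the identity representation of $\rC^*(X)\subset B(H)$ directly where you compose with a faithful representation of $A$.
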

\begin{proof}
As above, let $q:\rC^*(X)+J\to (\rC^*(X)+J)/J$ denote the quotient map.
Because $\rC^*(X)\cap J=\{0\}$,  there is a $*$-isomorphism $\theta:(\rC^*(X)+J)/J\to \rC^*(X)$ satisfying $\theta\circ q=\id$ on $\rC^*(X)$. Put $\pi=\theta\circ q$, which is a unital $*$-homomorphism on $\rC^*(X)+J$ annihilating $J$. We may assume that $\rC^*(X)\subset B(H)$ for some Hilbert space $H$. By Lemma \ref{L:singrep}, there is a unique unital $*$-representation $\Pi:\B(X,J)\to B(H)$ annihilating $\bM_2(J)$ such that $(\Pi,\pi)$ is $(X,J)$-adapted.
Since $\pi=\id$ on $\rC^*(X)$, the identity map on $\rC^*(X)+J$ is a contractive completely positive extension of $\pi|_X$, which is clearly distinct from $\pi$ as it does not annihilate $J$. Hence, $\Pi$ does not have the unique extension property with respect to $\S(X,J)$ by virtue of Theorem \ref{T:singrepuep}.
\end{proof}

\section{An obstruction to Arveson's hyperrigidity conjecture}\label{S:obst}

In this section, we leverage the information gathered above in order to produce counterexamples to Arveson's conjecture. In the setting of Corollary \ref{C:intnothyperrigid}, what remains to be done is to guarantee that all irreducible $*$-representations be boundary representations. We will identify sufficient conditions for this to occur, thereby highlighting a certain obstruction to the conjecture.

\subsection{Bidual preliminaries}
 Let $A$ be a $\rC^*$-algebra.  Given a bounded linear map $\phi:A\to B(H)$, we let $\widehat\phi:A^{**}\to B(H)$ denote its unique weak-$*$ continuous extension. When, $\phi$ is multiplicative, then so is $\widehat\phi$. 

 Let $\Omega$ be a set of unitary equivalence classes of irreducible $*$-representations of $A$. For each $\omega\in \Omega$, choose an irreducible $*$-representation $\sigma_\omega:A\to B(K_\omega)$ in the class $\omega$. Let $p_\omega\in A^{**}$ denote the support projection of $\widehat{\sigma_\omega}$, so that $\ker \widehat{\sigma_\omega}=A^{**}(1-p_\omega)$. Since each $\sigma_\omega$ is irreducible, there is no non-zero proper central projection in $A^{**}$ dominated by $p_\omega$. Hence, given $\omega,\omega'\in \Omega$ such that $p_\omega p_{\omega'}\neq 0$, we must have $p_\omega=p_{\omega'}$. In turn, because the representations are irreducible, this implies that $\omega=\omega'$ by \cite[Corollary 3.8.10]{pedersen2018}. Hence, the set $\{p_\omega:\omega\in \Omega\}$ consists of pairwise mutually orthogonal projections and we define
\begin{equation}\label{Eq:zOmega}
z_\Omega=\bigoplus_{\omega\in \Omega}p_\omega
\end{equation}
which is a central projection in $A^{**}$. We now record a useful property of this projection.

\begin{lemma}\label{L:atomic}
Let $A$ be a $\rC^*$-algebra and let $\Omega$ be a non-empty subset of unitary equivalence classes of irreducible $*$-representations. Let $\pi:A\to B(H)$ be a non-degenerate $*$-representation. Then, the following statements are equivalent.
\begin{enumerate}[{\rm (i)}]
\item There is a non-empty subset $\Omega_\pi\subset \Omega$ and a set of cardinal numbers $\{\kappa_\omega:\omega\in \Omega_\pi\}$ such that $\pi$ is unitary equivalent to $\bigoplus_{\omega\in \Omega_\pi}\sigma^{(\kappa_\omega)}_\omega$.
\item $\widehat\pi(z_\Omega)=1$.
\end{enumerate}
\end{lemma}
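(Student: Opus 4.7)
The plan is to prove the equivalence by leveraging three standard facts about the enveloping von Neumann algebra $A^{**}$: (a) the support projection $p_\omega$ of an irreducible representation $\sigma_\omega$ is central in $A^{**}$; (b) the weak-$*$ continuous extension $\widehat{\sigma_\omega}$ restricts to a normal $*$-isomorphism from the central summand $A^{**}p_\omega$ onto $B(K_\omega)$; and (c) every normal unital $*$-representation of a type $I$ factor $B(K)$ is unitarily equivalent to an ampliation of the identity representation.

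For the direction (i)$\Rightarrow$(ii), I first observe that $\widehat{\sigma_\omega}(p_\omega)=1$, since $1-p_\omega\in\ker\widehat{\sigma_\omega}$ and $\widehat{\sigma_\omega}$ is unital, while $\widehat{\sigma_\omega}(p_{\omega'})=0$ for $\omega'\neq \omega$ by the mutual orthogonality of the support projections noted in the paragraph introducing $z_\Omega$. Since $z_\Omega$ is the weak-$*$ supremum of the pairwise orthogonal family $\{p_\omega\}_{\omega\in\Omega}$ and $\widehat{\sigma_\omega}$ is normal, I obtain $\widehat{\sigma_\omega}(z_\Omega)=1$ for every $\omega\in\Omega$. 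Under the decomposition $\pi\cong\bigoplus_{\omega\in\Omega_\pi}\sigma_\omega^{(\kappa_\omega)}$, the extension $\widehat\pi$ decomposes correspondingly, and applying the previous identity on each summand yields $\widehat\pi(z_\Omega)=1$.

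For the converse (ii)$\Rightarrow$(i), set $q_\omega=\widehat\pi(p_\omega)$ for $\omega\in\Omega$, and let $\Omega_\pi=\{\omega\in\Omega:q_\omega\neq 0\}$, which is non-empty since $\sum_{\omega}q_\omega=\widehat\pi(z_\Omega)=1$. The projections $q_\omega$ are pairwise orthogonal and central in $\widehat\pi(A^{**})$, so $H_\omega:=q_\omega H$ decomposes $H$ into $\pi$-reducing subspaces, giving $\pi\cong\bigoplus_{\omega\in\Omega_\pi}\pi_\omega$ where $\pi_\omega$ is the restriction of $\pi$ to $H_\omega$. To identify $\pi_\omega$ as a multiple of $\sigma_\omega$, I invert the normal $*$-isomorphism $\widehat{\sigma_\omega}\colon A^{**}p_\omega\to B(K_\omega)$ and compose with $\widehat\pi|_{A^{**}p_\omega}\colon A^{**}p_\omega\to B(H_\omega)$ to produce a normal unital $*$-representation $\rho_\omega\colon B(K_\omega)\to B(H_\omega)$. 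By the structure theorem for normal representations of a type $I$ factor, $\rho_\omega$ is unitarily equivalent to $\id_{B(K_\omega)}^{(\kappa_\omega)}$ for some cardinal $\kappa_\omega$. Since $\widehat{\sigma_\omega}(ap_\omega)=\sigma_\omega(a)$ for all $a\in A$, unpacking the definitions gives $\pi_\omega(a)=\rho_\omega(\sigma_\omega(a))$, so $\pi_\omega\cong\sigma_\omega^{(\kappa_\omega)}$, which is the desired conclusion.

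The main technical point is the identification in the second direction of $\pi_\omega$ with a multiple of $\sigma_\omega$, which rests on the normal-representation theorem for type $I$ factors applied after passing through the isomorphism $A^{**}p_\omega\cong B(K_\omega)$. Everything else is bookkeeping with central projections and normal extensions, and should pose no difficulty beyond keeping the notation straight when $\Omega$ is uncountable.
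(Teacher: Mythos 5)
Your proof is correct and follows essentially the same route as the paper: the forward direction is the same normality/orthogonality bookkeeping, and for the converse both arguments decompose $H$ along the central projections $q_\omega=\widehat\pi(p_\omega)$ and then transport $\pi_\omega$ through the normal isomorphism $A^{**}p_\omega\cong B(K_\omega)$ to invoke the structure theorem for normal unital representations of $B(K_\omega)$. The only cosmetic difference is that the paper phrases the key map as $\theta_\omega:\sigma_\omega(A)''\to\pi_\omega(A)''$ (justifying well-definedness via minimality of $p_\omega$), whereas you build the same map by inverting $\widehat{\sigma_\omega}|_{A^{**}p_\omega}$ directly.
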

\begin{proof}
(i)$\Rightarrow$(ii): This is trivial.

(ii)$\Rightarrow$(i): Let $\Omega_\pi$ be the set consisting of those $\omega\in \Omega$ for which $\widehat\pi(p_\omega)\neq 0$. Note that $\Omega_\pi$ is non-empty since 
\begin{equation}\label{Eq:pizomega}
1=\widehat\pi(z_\Omega)=\bigoplus_{\omega\in \Omega}\widehat\pi(p_\omega).
\end{equation}
For $\omega\in \Omega_\pi$, we let $q_\omega=\widehat\pi(p_\omega)$, which is a non-zero projection in $B(H)$ whose range $H_\omega$ is reducing for $\pi$. By virtue of \eqref{Eq:pizomega}, we may write $\pi=\bigoplus_{\omega\in \Omega_\pi} \pi_\omega$, where $\pi_\omega:A\to B(H_\omega)$ is the compression of $\pi$ to $H_\omega$. For $\omega\in \Omega_\pi$, we have $\widehat{\pi_\omega}(p_\omega)=1$, so that $p_\omega$ dominates the support projection of $\pi_\omega$. On the other hand, we saw before the lemma that $p_\omega$ is a minimal central projection, so it must coincide with the support projection of $\pi_\omega$. Therefore, the map $\theta_\omega: \sigma_\omega(A)''\to \pi_\omega(A)''$ defined as
\[
\theta_\omega(\widehat{\sigma_\omega}(x))=\widehat{\pi_\omega}(x),\quad x\in A^{**}
\]
is  a weak-$*$ continuous $*$-isomorphism. Since $\sigma_\omega$ is irreducible, we infer that $\theta_\omega$ is a unital, weak-$*$ continuous $*$-representation of $B(K_\omega)$ into $B(H_\omega)$.  We conclude that there is a cardinal number $\kappa_\omega$ and a unitary $U_\omega: H_\omega\to K_\omega^{(\kappa_\omega)}$ such that
\[
\theta_\omega(\widehat{\sigma_\omega}(x))= U_\omega^*\widehat{\sigma_\omega}(x)^{(\kappa_\omega)} U_\omega
\]
for every $x\in A^{**}$. 
Hence $\pi_\omega$ is unitarily equivalent to some ampliation of $\sigma_\omega$, as desired.
\end{proof}

There are two particular choices of sets $\Omega$ that will be relevant for us. First, when $\Omega$ is the entire spectrum of $A$ (i.e. the set of all unitary equivalence classes of irreducible $*$-representations), then the resulting projection $z^A_{\at}\in A^{**}$ is the so-called \emph{atomic} projection of $A$. 

Assume now that $B\subset A$ is a $\rC^*$-subalgebra. Then, the atomic projection $z_{\at}^B$ may be viewed as a projection in $B^{\perp\perp}\subset A^{**}$, using the standard identification $B^{**}\cong B^{\perp\perp}$. This projection can be compared with the atomic projection of $A$ by means of the next result.

\begin{lemma}\label{L:atcomp}
Let $A$ be a $\rC^*$-algebra and $B\subset A$ be a $\rC^*$-subalgebra. Then, the following statements are equivalent. 
\begin{enumerate}[{\rm (i)}]
\item If $\pi:A\to B(H)$ is an irreducible $*$-representation, then $\pi|_{B}$ is a direct sum of irreducible $*$-representations on $B$.
\item $z_{\at}^B\geq z^A_{\at}$.
\end{enumerate}
\end{lemma}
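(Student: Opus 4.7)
The plan is to reduce both (i) and (ii) to a single representation-theoretic condition, namely that $\widehat\pi(z^B_{\at}) = 1_{B(H_\pi)}$ for every irreducible $*$-representation $\pi: A \to B(H_\pi)$, and then appeal directly to Lemma~\ref{L:atomic} applied to $B$.

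First, I would reformulate (ii). Using the definition \eqref{Eq:zOmega} with $\Omega$ the full spectrum of $A$, the projection $z^A_{\at}$ is the orthogonal sum of the mutually orthogonal central support projections $p_\pi$ of the irreducible $*$-representations of $A$. Hence $z^A_{\at} \leq z^B_{\at}$ in $A^{**}$ if and only if $p_\pi \leq z^B_{\at}$ for every such $\pi$. Since $\widehat\pi(p_\pi) = 1$ and $\ker \widehat\pi = A^{**}(1-p_\pi)$, this last inequality is equivalent to the single scalar-free condition
\[
\widehat\pi(z^B_{\at}) = 1_{B(H_\pi)} \qforal \text{irreducible } \pi:A\to B(H_\pi).
\]

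Second, I would establish the key identification $\widehat\pi|_{B^{\perp\perp}} = \widehat{\pi|_B}$ under the natural embedding $B^{**} \cong B^{\perp\perp} \subset A^{**}$. Both maps are weak-$*$ continuous $*$-homomorphisms into $B(H_\pi)$ that restrict to $\pi|_B$ on $B$, so they must coincide by uniqueness of weak-$*$ continuous extensions. In particular, since $z^B_{\at}$ belongs to $B^{\perp\perp}$, we obtain the crucial equality $\widehat\pi(z^B_{\at}) = \widehat{\pi|_B}(z^B_{\at})$.

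Third, I would apply Lemma~\ref{L:atomic} to the $\rC^*$-algebra $B$ with $\Omega$ taken to be its full spectrum. That lemma tells us that $\widehat{\pi|_B}(z^B_{\at}) = 1$ holds precisely when $\pi|_B$ is unitarily equivalent to a direct sum of ampliations of irreducible $*$-representations of $B$, which is the same as saying $\pi|_B$ is a direct sum of irreducible $*$-representations of $B$ (each ampliation being itself such a direct sum). Combining the three steps, (ii) is equivalent to the statement that for every irreducible $\pi$ of $A$, $\pi|_B$ is a direct sum of irreducibles on $B$, which is exactly (i).

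The only genuinely subtle point in executing this plan is the identification of $\widehat\pi|_{B^{\perp\perp}}$ with $\widehat{\pi|_B}$, which requires a careful handling of the canonical embedding $B^{**} \hookrightarrow A^{**}$; everything else is a direct bookkeeping of the contents of Lemma~\ref{L:atomic}. I would also note that the condition $\widehat{\pi|_B}(z^B_{\at}) = 1$ implicitly forces $\pi|_B$ to be non-degenerate (since $z^B_{\at} \leq 1_{B^{**}}$), which is consistent with the interpretation that a direct sum of (non-zero) irreducible $*$-representations in (i) is itself non-degenerate.
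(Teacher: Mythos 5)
Your proof is correct and follows essentially the same route as the paper: both directions come down to applying Lemma \ref{L:atomic} to $A$ and to $B$, linked by the identification $\widehat\pi|_{B^{\perp\perp}}=\widehat{\pi|_B}$ (which the paper uses implicitly when it writes $\widehat\pi(z^B_{\at})$). The only cosmetic difference is that you verify $p_\pi\leq z^B_{\at}$ one irreducible representation at a time and then take the supremum, whereas the paper packages all irreducibles into a single atomic representation with $\widehat\pi(z^A_{\at})=1$; the substance is identical.
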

\begin{proof}
(i)$\Rightarrow$(ii): Choose a $*$-representation   $\pi:A\to B(H)$  such that $\widehat \pi(z_{\at}^A)=1$. By Lemma \ref{L:atomic}, we see that $\pi$ is a direct sum of irreducible $*$-representations. By assumption, this implies that $\pi|_B$ is also a direct sum of irreducible $*$-representations of $B$, so $\widehat\pi(z^B_{\at})=1$ by another application of Lemma \ref{L:atomic}. We conclude that $z_{\at}^B\geq z^A_{\at}$.

(ii)$\Rightarrow$(i):  Let $\pi:A\to B(H)$ be an irreducible $*$-representation. Then, $\widehat\pi(z_{\at}^A)=1$ by definition of the atomic projection. By assumption, we must have $\widehat\pi(z_{\at}^B)=1$ as well, so that $\pi|_B$ is a direct sum of irreducible $*$-representations of $B$ by Lemma \ref{L:atomic}.
\end{proof}

Let us now describe the second case where the construction \eqref{Eq:zOmega} is of particular relevance to us. Let $J\subset A$ be a closed two-sided ideal. Let $\Omega_J$ denote the set of unitary equivalence classes of irreducible $*$-representations of $A$ annihilating $J$. 

We wish to establish a connection between the projection $z_{\Omega_J}$ defined by \eqref{Eq:zOmega} and the \emph{support} projection of $J$, that is the central projection $p_J\in A^{**}$ satisfying $J^{\perp\perp}=A^{**}p$.

\begin{lemma}\label{L:projequal}
We have $z_{\Omega_J}=z^A_{\at}(1-p_J)$.
\end{lemma}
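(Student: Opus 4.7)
The plan is to establish, for each $\omega$ in the spectrum of $A$, a dichotomy for the product $p_\omega(1-p_J)$ and then sum over $\omega$. Concretely, I want to show the key equivalence
\[
\omega\in \Omega_J \quad\Longleftrightarrow\quad p_\omega(1-p_J)=p_\omega,
\]
while for $\omega\notin \Omega_J$ we should instead have $p_\omega(1-p_J)=0$. Granted this, since the projections $\{p_\omega\}_{\omega\in \Spec(A)}$ are pairwise orthogonal with supremum $z^A_{\at}$, multiplying by the central projection $1-p_J$ termwise yields exactly $\sum_{\omega\in \Omega_J}p_\omega=z_{\Omega_J}$, which is the desired identity.

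For the dichotomy, I would first use that $p_\omega$ was shown, in the discussion preceding the lemma, to be a \emph{minimal} central projection of $A^{**}$: no non-zero proper central subprojection exists below it. Since $1-p_J$ is central, the product $p_\omega(1-p_J)$ is a central projection dominated by $p_\omega$, hence equal to $0$ or to $p_\omega$ by minimality. This immediately splits the spectrum into two disjoint classes according to whether $p_\omega\leq 1-p_J$ or $p_\omega\leq p_J$.

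The second step is to match this dichotomy with membership in $\Omega_J$. For the forward implication, assume $\omega\in \Omega_J$, so that $\sigma_\omega$ annihilates $J$. Since $J^{\perp\perp}$ is the weak-$*$ closure of $J$ in $A^{**}$ and $\widehat{\sigma_\omega}$ is weak-$*$ continuous, it follows that $\widehat{\sigma_\omega}(J^{\perp\perp})=0$; in particular $\widehat{\sigma_\omega}(p_J)=0$. Because $\ker\widehat{\sigma_\omega}=A^{**}(1-p_\omega)$, this forces $p_J=p_J(1-p_\omega)$, equivalently $p_\omega\leq 1-p_J$. Conversely, if $p_\omega\leq 1-p_J$ then $\widehat{\sigma_\omega}(p_J)=\widehat{\sigma_\omega}(p_\omega p_J)=0$, and for any $j\in J\subset J^{\perp\perp}=A^{**}p_J$ we have $j=jp_J$, whence $\sigma_\omega(j)=\widehat{\sigma_\omega}(j)\widehat{\sigma_\omega}(p_J)=0$, showing $\omega\in\Omega_J$.

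Combining these two steps completes the proof: the map $\omega\mapsto p_\omega(1-p_J)$ is $p_\omega$ for $\omega\in \Omega_J$ and $0$ otherwise, so
\[
z^A_{\at}(1-p_J)=\Bigl(\bigoplus_{\omega\in \Spec(A)}p_\omega\Bigr)(1-p_J)=\bigoplus_{\omega\in \Omega_J}p_\omega=z_{\Omega_J}.
\]
No step is really an obstacle; the only point requiring slight care is the use of weak-$*$ continuity of $\widehat{\sigma_\omega}$ together with the identification $J^{\perp\perp}=A^{**}p_J$ to transfer the vanishing of $\sigma_\omega$ on $J$ to the vanishing of $\widehat{\sigma_\omega}$ on $p_J$.
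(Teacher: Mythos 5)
Your proof is correct, and it takes a mildly different route from the paper's on one of the two inequalities. For $z_{\Omega_J}\le z^A_{\at}(1-p_J)$ your argument is essentially the paper's: $\omega\in\Omega_J$ forces $\widehat{\sigma_\omega}(p_J)=0$, hence $p_\omega p_J=0$. Where you diverge is the reverse inequality. The paper picks a $*$-representation $\pi$ with $\widehat\pi\bigl(z^A_{\at}(1-p_J)\bigr)=1$ and invokes Lemma \ref{L:atomic} to decompose $\pi$ into irreducibles, each of which is then seen to annihilate $J$ and hence to be supported under $z_{\Omega_J}$. You instead exploit the minimality of each $p_\omega$ as a central projection (established in the discussion preceding the lemma) to obtain the dichotomy $p_\omega(1-p_J)\in\{0,p_\omega\}$, identify the two cases with membership in $\Omega_J$ via the equivalence $\sigma_\omega(J)=0\Leftrightarrow p_\omega p_J=0$, and then sum termwise. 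Your route is more elementary: it stays entirely inside $A^{**}$ and bypasses Lemma \ref{L:atomic}, at the small cost of having to note that multiplication by the central projection $1-p_J$ passes through the weak-$*$ convergent sum $\bigoplus_\omega p_\omega$, which is routine since multiplication by a fixed element is separately weak-$*$ continuous in a von Neumann algebra. Both arguments ultimately rest on the same two facts, namely the centrality of $p_J$ and the minimality of the projections $p_\omega$.
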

\begin{proof}
Plainly we have  $z_{\Omega_J}\leq z^A_{\at}$. Moreover, for every $[\sigma]\in \Omega_J$ we have $J^{\perp\perp}\subset \ker \widehat\sigma$ whence $p_J\leq 1-z_{\Omega_J}$. Since $z^A_{\at}$ and $p_J$ are central, we conclude that $z_{\Omega_J}\leq z_{\at}(1-p_J)$. 

In proving the reverse inequality, we may assume that the projection $z_{\at}(1-p_J)$ is non-zero. Hence, there is a $*$-representation $\pi:A\to B(H)$ such that $\widehat\pi(z_{\at}(1-p_J))=1$. We must show that $\widehat{\pi}(z_{\Omega_J})=1$.

On one hand, we see that $\widehat\pi(z_{\at})=1$, so by Lemma \ref{L:atomic} there is a set $\{\sigma_\lambda:\lambda\in \Lambda\}$ of irreducible $*$-representations of $A$ such that $\pi=\bigoplus_{\lambda\in \Lambda}\sigma_\lambda$. On the other hand, we also have $\widehat\pi(p_J)=0$ so $\widehat{\sigma_\lambda}(p_J)=0$ or $J^{\perp\perp}\subset \ker \widehat{\sigma_\lambda}$ for every $\lambda\in \Lambda$. We conclude that $[\sigma_\lambda]\in \Omega_J$ so $\widehat{\sigma_\lambda}(z_{\Omega_J})=1$ for every $\lambda\in \Lambda$, which in turn implies that $\widehat{\pi}(z_{\Omega_J})=1$.
\end{proof}

\subsection{When are all irreducibles boundary representations?}
We now return to our usual setting. Let $A$ be a unital $\rC^*$-algebra containing a closed two-sided ideal $J$ and an operator system $X$.  We record another elementary preliminary fact.

\begin{lemma}\label{L:charOmegaJ}
Let $\pi:\rC^*(X)\to B(H)$ be an irreducible $*$-representation. Then,  $\pi$ annihilates $J\cap \rC^*(X)$  if and only if $\pi$ admits an extension to an irreducible $*$-representation of $\rC^*(X)+J$ that annihilates $J$.
\end{lemma}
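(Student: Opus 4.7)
The plan is to handle the two implications separately, with the forward direction relying on the second isomorphism theorem for $\rC^*$-algebras and the reverse direction being essentially tautological.

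For the reverse implication, suppose $\pi$ extends to an irreducible $*$-representation $\widetilde\pi:\rC^*(X)+J\to B(H)$ annihilating $J$. Then $\widetilde\pi$ annihilates $J\cap \rC^*(X)$, and since $\widetilde\pi|_{\rC^*(X)}=\pi$, it follows immediately that $\pi$ annihilates $J\cap \rC^*(X)$.

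For the forward implication, suppose $\pi:\rC^*(X)\to B(H)$ is an irreducible $*$-representation annihilating $J\cap \rC^*(X)$. Since $J$ is a closed two-sided ideal in $A$, the sum $\rC^*(X)+J$ is a $\rC^*$-subalgebra of $A$, and $J$ is a closed two-sided ideal of $\rC^*(X)+J$. By the second isomorphism theorem for $\rC^*$-algebras, the inclusion $\rC^*(X)\hookrightarrow \rC^*(X)+J$ induces a $*$-isomorphism
\[
\theta:\rC^*(X)/(J\cap \rC^*(X))\to (\rC^*(X)+J)/J.
\]
Because $\pi$ annihilates $J\cap \rC^*(X)$, it factors as $\pi=\pi'\circ q$ for a unique $*$-representation $\pi':\rC^*(X)/(J\cap \rC^*(X))\to B(H)$, where $q:\rC^*(X)\to \rC^*(X)/(J\cap \rC^*(X))$ is the quotient map. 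Letting $Q:\rC^*(X)+J\to (\rC^*(X)+J)/J$ denote the quotient map, I would then define
\[
\widetilde\pi=\pi'\circ \theta^{-1}\circ Q:\rC^*(X)+J\to B(H).
\]
By construction, $\widetilde\pi$ annihilates $J$ and agrees with $\pi$ on $\rC^*(X)$. Finally, $\widetilde\pi$ has the same image as $\pi$, so $\widetilde\pi(\rC^*(X)+J)'=\pi(\rC^*(X))'=\bC 1$, confirming that $\widetilde\pi$ is irreducible.

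The only step that requires some care is invoking the second isomorphism theorem, but for $\rC^*$-algebras this is standard once one observes that $\rC^*(X)+J$ is automatically norm-closed in $A$.
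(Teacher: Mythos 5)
Your proof is correct and follows essentially the same route as the paper: both factor $\pi$ through the quotient by $J\cap\rC^*(X)$ and transport it across the canonical $*$-isomorphism $\rC^*(X)/(J\cap\rC^*(X))\cong(\rC^*(X)+J)/J$ to get the desired extension, with the reverse direction being trivial. Your explicit verification of irreducibility via equality of images (hence of commutants) is exactly what the paper leaves as ``readily verified.''
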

\begin{proof}
Assume that $\pi$ annihilates  $J\cap \rC^*(X)$. Let \[Q:\rC^*(X)+J\to (\rC^*(X)+J)/J \qand q:\rC^*(X)\to \rC^*(X)/(\rC^*(X)\cap J)\]
 denote the natural quotient maps, and let $$\theta:(\rC^*(X)+J)/J\to  \rC^*(X)/(\rC^*(X)\cap J)$$ denote the usual $*$-isomorphism such that $\theta\circ Q|_{\rC^*(X)}=q$. Now, there is an irreducible $*$-representation $\widetilde \pi:q(\rC^*(X))\to B(H)$ such that $\widetilde \pi\circ q=\pi$. It is readily verified that the $*$-representation $\widetilde \pi \circ \theta\circ Q:\rC^*(X)+J\to B(H)$ is irreducible, extends $\pi$, and annihilates $J$.  The other direction is trivial.
\end{proof}

In what follows, we  will write $z^X_{\at}$ for the atomic projection of $\rC^*(X)$ and $p^X_J$ for the support projection of the closed two-sided ideal $J\cap \rC^*(X)$. We view  both of these as projections in $(\rC^*(X))^{\perp\perp}\subset A^{**}$. We also let  $p_J\in A^{**}$ denote the support projection of the ideal $J$. Since $(J\cap \rC^*(X))^{\perp\perp}\subset J^{\perp\perp}$, we always have that 
\begin{equation}\label{Eq:pJX}
p^X_J\leq p_J.
\end{equation}
The next result is a key technical tool.

\begin{proposition}\label{P:projzero}
Let $A$ be a unital $\rC^*$-algebra containing a closed two-sided ideal $J$ and an operator system $X$. 
Then, the following statements are equivalent.
\begin{enumerate}[{\rm (i)}]
\item The projections $z^X_{\at}(1-p^X_J)$ and $p_J$ are mutually orthogonal in $A^{**}$.
\item If $\theta:A\to B(H)$ is a pure unital completely positive map such that $\theta|_{\rC^*(X)}$ is an irreducible $*$-representation annihilating $\rC^*(X)\cap J$, then $\theta$ annihilates $J$.
\item If $\psi:A\to B(H)$ is a unital completely positive map such that $\psi|_{\rC^*(X)}$ is an irreducible $*$-representation annihilating $\rC^*(X)\cap J$, then $\psi$ annihilates $J$.
\end{enumerate}
\end{proposition}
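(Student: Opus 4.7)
The plan is to prove the cyclic chain (i) $\Rightarrow$ (iii) $\Rightarrow$ (ii) $\Rightarrow$ (i); the middle implication is immediate, since every pure unital completely positive map is a fortiori unital and completely positive, so the real content lies in the first and last implications.

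For (i) $\Rightarrow$ (iii), I would take $\psi$ as in the hypothesis and form a Stinespring dilation $\psi(a) = V^{*}\sigma(a)V$, where $\sigma : A \to B(K)$ is a unital $*$-representation and $V:H\to K$ is an isometry. Since $\psi|_{\rC^{*}(X)}$ is itself a $*$-representation, equality holds in Schwarz's inequality on $\rC^{*}(X)$, which forces $VH$ to be reducing for $\sigma|_{\rC^{*}(X)}$ with compressed subrepresentation $\pi_{0} := \psi|_{\rC^{*}(X)}$. Setting $q = VV^{*}$ and letting $p_{[\pi_{0}]} \in \rC^{*}(X)^{**}$ denote the minimal central projection corresponding to the class of $\pi_{0}$, this yields $q \leq \widehat{\sigma}(p_{[\pi_{0}]})$. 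Because $\pi_{0}$ annihilates $\rC^{*}(X) \cap J$, Lemma \ref{L:projequal} applied to the inclusion $\rC^{*}(X) \cap J \subset \rC^{*}(X)$ gives $p_{[\pi_{0}]} \leq z^{X}_{\at}(1 - p^{X}_{J})$. Combined with (i),
\[
q\,\widehat{\sigma}(p_{J}) \leq \widehat{\sigma}(z^{X}_{\at}(1 - p^{X}_{J}))\,\widehat{\sigma}(p_{J}) = \widehat{\sigma}(z^{X}_{\at}(1 - p^{X}_{J})\,p_{J}) = 0.
\]
Taking adjoints and using $qV = V$ yields $\widehat{\sigma}(p_{J})V = 0$, so that $\sigma(j)V = 0$ for every $j \in J$ (via $j = p_{J}\,j$) and therefore $\psi|_{J} = 0$.

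For (ii) $\Rightarrow$ (i), I would argue by contrapositive. Assume $z^{X}_{\at}(1 - p^{X}_{J})\,p_{J} \neq 0$ and decompose $z^{X}_{\at}(1 - p^{X}_{J})$ into pairwise orthogonal minimal central constituents inside $\rC^{*}(X)^{**}$; this produces a unitary equivalence class $\omega$ of irreducible $*$-representations of $\rC^{*}(X)$ annihilating $\rC^{*}(X) \cap J$ with $p_{\omega}\,p_{J} \neq 0$. Realize the situation concretely in the universal representation $\pi_{u} : A \to B(\H_{u})$, identifying $A^{**}$ with $\pi_{u}(A)''$. The reducing subspace $p_{\omega}\H_{u}$ carries $\pi_{u}|_{\rC^{*}(X)}$ as the isotypic component $\sigma_{\omega}^{(\kappa)}$, so one can identify $p_{\omega}\H_{u} \cong K_{\omega} \otimes \ell^{2}(\kappa)$ with $\pi_{u}|_{\rC^{*}(X)}$ acting as $\sigma_{\omega}(\cdot) \otimes 1$. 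The compression $p_{\omega}p_{J}p_{\omega}$ is then a nonzero positive operator on this tensor product, and because rank-one tensors form a total set one can find unit vectors $\xi_{0} \in K_{\omega}$ and $\eta_{0} \in \ell^{2}(\kappa)$ with $\langle p_{J}(\xi_{0} \otimes \eta_{0}), \xi_{0} \otimes \eta_{0}\rangle > 0$. The isometry $V_{\eta_{0}} : K_{\omega} \to p_{\omega}\H_{u}$, $\xi \mapsto \xi \otimes \eta_{0}$, embeds a single copy of $\sigma_{\omega}$, and the ucp map $\Psi(a) = V_{\eta_{0}}^{*}\pi_{u}(a)V_{\eta_{0}}$ extends $\sigma_{\omega}$ while satisfying $\Psi|_{J} \neq 0$. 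To upgrade $\Psi$ to a \emph{pure} violator of (ii), I would invoke Krein--Milman: the set $\mathcal{E}$ of unital completely positive extensions of $\sigma_{\omega}$ to $A$ is convex and weak-$*$ compact, and the combination of Schwarz's inequality (which is an equality on $\rC^{*}(X)$) with the parallelogram identity shows that every extreme point of $\mathcal{E}$ is pure as a ucp map on $A$. The subset of $\mathcal{E}$ annihilating $J$ is convex and weak-$*$ closed, and $\Psi$ lies outside of it, so some extreme point of $\mathcal{E}$ does too; this extreme point is the desired pure ucp map witnessing the failure of (ii).

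The main obstacle is the construction step inside the universal representation: $p_{\omega}$ lies in $\rC^{*}(X)^{\perp\perp}$ while $p_{J}$ is central in $A^{**}$, and one must leverage the abstract nonvanishing $p_{\omega}p_{J} \neq 0$ to produce a \emph{concrete} isotypic rank-one tensor $\xi_{0} \otimes \eta_{0}$ realising a nonzero pairing with $p_{J}$. Once this is in hand, the Krein--Milman passage from a ucp extension to a pure one, together with the equality-in-Schwarz characterisation of purity over a $*$-subalgebra, is routine, as is the Stinespring-based bookkeeping behind (i) $\Rightarrow$ (iii).
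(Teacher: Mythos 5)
Your proposal is correct, but it routes the equivalences differently from the paper in both directions, and one step needs a real citation rather than a gesture. For the forward direction, the paper proves (i)$\Rightarrow$(ii) by a multiplicative-domain argument (purity of $\theta$ forces its minimal Stinespring representation to be irreducible, so $\widehat\theta(p_J)\in\{0,1\}$, and $\widehat\theta(z^X_{\at}(1-p^X_J))=1$ rules out the value $1$), and then gets (iii) from (ii) by Krein--Milman. You instead prove (i)$\Rightarrow$(iii) directly: dilate an arbitrary ucp extension $\psi$, note that $VH$ is reducing for $\sigma|_{\rC^*(X)}$ because $\psi|_{\rC^*(X)}$ is multiplicative, and deduce $VV^*\leq \widehat\sigma\bigl(z^X_{\at}(1-p^X_J)\bigr)$, so that $VV^*$ is orthogonal to $\widehat\sigma(p_J)$. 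This bypasses purity entirely for that implication and is a legitimate, arguably cleaner, alternative; just restate the displayed line as two facts about commuting projections rather than an operator inequality between non-self-adjoint products. For the converse, the paper proves (iii)$\Rightarrow$(i), which only needs a \emph{ucp} violator and gets one by compressing a representation $\pi$ with $\widehat\pi\bigl(z^X_{\at}(1-p^X_J)p_J\bigr)=1$ to a reducing subspace for $\pi|_{\rC^*(X)}$; your chain forces you to prove (ii)$\Rightarrow$(i), i.e.\ to produce a \emph{pure} violator, which is why you must re-import the Krein--Milman/face argument that the paper deploys on the other side of the cycle (your universal-representation/elementary-tensor construction of the ucp violator $\Psi$ is also more elaborate than the paper's compression, though it works). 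The step you should not wave at is the claim that every extreme point of the extension set $E$ is pure: ``Schwarz plus the parallelogram identity'' is not a proof, since extreme points of the ucp maps into $B(H)$ need not be pure in general. The correct argument, which the paper spells out, is that $\sigma_\omega$ is pure by \cite{arveson1969}*{Corollary 1.4.3}, hence extreme among ucp maps on $\rC^*(X)$, hence $E$ is a face of the ucp maps on $A$, and then \cite{kleski2014boundary}*{Proposition 2.2} upgrades extremality to purity. With that reference supplied, your proof closes.
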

\begin{proof}
Throughout the proof, we let $\Omega_J$ denote the set of unitary equivalence classes of irreducible $*$-representations of $\rC^*(X)$ annihilating the closed two-sided ideal $\rC^*(X)\cap J$. Let $z^X_{\Omega_J}\in ( \rC^*(X))^{**}$ be the projection given by \eqref{Eq:zOmega}. Then, $z^X_{\at}(1-p^X_J)=z^X_{\Omega_J}$, by virtue of Lemma \ref{L:projequal}.

(i)$\Rightarrow$(ii): Let $\pi:\rC^*(X)\to B(H)$ be an irreducible $*$-representation annihilating $\rC^*(X)\cap J$. Then,  $[\pi]\in \Omega_J$. In particular, $\widehat\pi(z^X_{\Omega_J})=1$.  Let $\theta:A\to B(H)$ be a pure unital completely positive map extending $\pi$.

Assume towards a contradiction that $\widehat\theta(p_J)\neq 0$. An application of \cite[Corollary 1.4.3]{arveson1969} shows that the minimal Stinespring representation $\sigma$ of $\theta$ must be irreducible. Since $p_J$ is central and $\widehat\sigma(p_J)\neq 0$, we find $\widehat\sigma(p_J)=1$ so $\widehat\theta(p_J)=1$ as well.
 On the other hand, $\widehat \theta(z^X_{\Omega_J})=\widehat \pi(z^X_{\Omega_J})=1$ since $z^X_{\Omega_J}\in (\rC^*(X))^{\perp\perp}\subset A^{**}$ and $\theta$ agrees with $\pi$ on $\rC^*(X)$. Hence, both $p_J$ and $z^X_{\Omega_J}$ lie in the multiplicative domain of $\widehat\theta$, so $\widehat\theta(z^X_{\Omega_J}p_J)=1$ which is impossible since $z^X_{\Omega_J}p_J=0$ by assumption.

(ii)$\Rightarrow$(iii):  Let $\pi:\rC^*(X)\to B(H)$ be an irreducible $*$-representation annihilating $\rC^*(X)\cap J$.  Let $E$ denote the set of unital completely positive maps $\psi:A \to B(H)$ extending $\pi$. Clearly, $E$ is convex and compact in the pointwise weak-$*$ topology. By the Krein--Milman theorem, it suffices to fix an extreme point $\theta$ of $E$ and to show that $\theta$ annihilates $J$. 

For convenience, we denote by $P(\rC^*(X))$ the convex set of unital completely positive maps $\rC^*(X)\to B(H)$. We define $P(A)$ in a similar fashion.  Now, since $\pi$ is irreducible, the restriction $\theta|_{\rC^*(X)}=\pi$ is a pure completely positive map by \cite[Corollary 1.4.3]{arveson1969}. In particular, $\theta|_{\rC^*(X)}$ is an extreme point of $P(\rC^*(X))$. It follows readily from this that $E$ is a face in $P(A)$, so $\theta$ is in fact an extreme point of $P(A)$. By \cite[Proposition 2.2]{kleski2014boundary}, we further infer that $\theta$ must be a pure completely positive map.  Thus, $\theta$ annihilates $J$ by assumption.

(iii)$\Rightarrow$(i): Assume that $z_{\Omega_J}^Xp_J\neq 0$. Because $p_J$ is a central projection in $A^{**}$,  $z^X_{\Omega_J}p_J$ must be a non-zero projection in $A^{**}$, so there is a unital $*$-representation $\pi:A\to B(H)$ such that $\widehat\pi(z^X_{\Omega_J}p_J)=1$. In particular, if we let $\rho=\pi|_{\rC^*(X)}$, then $\widehat\rho(z^X_{\Omega_J})=1$. By Lemma \ref{L:atomic}, there is a non-zero reducing subspace $K\subset H$  for $\rho$ such that the corresponding compression $\sigma:\rC^*(X)\to B(K)$ is an irreducible $*$-representation annihilating $\rC^*(X)\cap J$.
Let $\psi:A\to B(K)$ be the unital completely positive map defined as
\[
\psi(a)=P_K \pi(a)|_K,\quad a\in A.
\]
 Then, $\psi$ agrees with $\sigma$ on $\rC^*(X)$, and $\widehat\psi(p_J)=P_K \widehat\pi(p_J)|_K=P_K\neq 0$. 
In particular, $\psi$ does not annihilate $J$. 
\end{proof}

We can now characterize when all the irreducible $*$-representations of $\B(X,J)$ are boundary representations for $\S(X,J)$.

\begin{theorem}\label{T:projcriterion}
Let $A$ be a unital $\rC^*$-algebra containing a closed two-sided ideal $J$ and an operator system $X$.  
Consider the following statements.
\begin{enumerate}[{\rm (i)}]
\item All irreducible $*$-representations of $\rC^*(X)$ annihilating $ \rC^*(X)\cap J$ are boundary representations for $X$.
\item the projections $z^X_{\at}(1-p^X_J)$ and $p_J$ are mutually orthogonal in $A^{**}$.
\item All irreducible $*$-representations of $\B(X,J)$ are boundary representations for $\S(X,J)$.
\end{enumerate}
Then, we have ${\rm (i)}+{\rm (ii)}\Leftrightarrow {\rm (iii)}$.
\end{theorem}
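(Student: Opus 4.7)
The plan is to reduce the question to the UEP characterizations already established in Theorems \ref{T:uepnondeg}--\ref{T:singrepuep}, combined with the analytic reformulation of (ii) furnished by Proposition \ref{P:projzero}. The first reduction is to observe that every irreducible $*$-representation of $\B(X,J)$ is either nondegenerate on $\bM_2(J)$ or annihilates $\bM_2(J)$ (via the direct-sum decomposition together with irreducibility). Theorem \ref{T:uepnondeg} settles the nondegenerate case automatically, so (iii) reduces to the UEP holding for every irreducible $\Pi:\B(X,J)\to B(H)$ annihilating $\bM_2(J)$. By Lemma \ref{L:singrep}, such $\Pi$ correspond bijectively to unital $*$-representations $\pi:\rC^*(X)+J\to B(H)$ annihilating $J$ via the $(X,J)$-adapted pairing, and the commutant identity \eqref{Eq:commutant} gives $\Pi$ irreducible iff $\pi$ is irreducible; since $\pi$ annihilates $J$, the range $\pi(\rC^*(X)+J)$ coincides with $\pi(\rC^*(X))$, so this is further equivalent to irreducibility of $\rho := \pi|_{\rC^*(X)}$, and by Lemma \ref{L:charOmegaJ} every irreducible $\rho$ on $\rC^*(X)$ annihilating $\rC^*(X)\cap J$ arises this way from a unique $\pi$. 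Invoking Theorem \ref{T:singrepuep}, (iii) becomes: for every such $\rho$, the associated $\pi$ is the unique contractive completely positive extension of $\rho|_X$ to $\rC^*(X)+J$.

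For (i)+(ii) $\Rightarrow$ (iii), I would fix such a $\rho$ and an arbitrary UCP extension $\phi:\rC^*(X)+J\to B(H)$ of $\rho|_X$. The restriction $\phi|_{\rC^*(X)}$ extends $\rho|_X$, so (i) forces $\phi|_{\rC^*(X)}=\rho$. Applying Arveson's extension theorem to produce a UCP dilation $\widetilde\phi:A\to B(H)$ of $\phi$, the restriction $\widetilde\phi|_{\rC^*(X)}=\rho$ is an irreducible $*$-representation annihilating $\rC^*(X)\cap J$, so (ii) together with Proposition \ref{P:projzero} forces $\widetilde\phi$ to annihilate $J$. Hence $\phi$ annihilates $J$ and is therefore determined by $\phi|_{\rC^*(X)}=\rho$, giving $\phi=\pi$.

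For (iii) $\Rightarrow$ (i), given an irreducible $\rho:\rC^*(X)\to B(H)$ annihilating $\rC^*(X)\cap J$, I would form the associated $\pi$ and $\Pi$; by (iii) and Theorem \ref{T:singrepuep}, $\pi$ is the unique CCP extension of $\rho|_X$ to $\rC^*(X)+J$. Any UCP extension $\psi:\rC^*(X)\to B(H)$ of $\rho|_X$ lifts via Arveson's extension theorem to a UCP map $\widetilde\psi:\rC^*(X)+J\to B(H)$, and uniqueness forces $\widetilde\psi=\pi$, so $\psi=\widetilde\psi|_{\rC^*(X)}=\rho$; thus $\rho$ has the UEP and is a boundary representation. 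For (iii) $\Rightarrow$ (ii), I would argue contrapositively: if (ii) fails, Proposition \ref{P:projzero} produces a UCP map $\psi:A\to B(H)$ whose restriction $\rho=\psi|_{\rC^*(X)}$ is irreducible and annihilates $\rC^*(X)\cap J$, yet $\psi$ does not annihilate $J$. Then $\psi|_{\rC^*(X)+J}$ is a UCP extension of $\rho|_X$ distinct from the associated $\pi$, so by Theorem \ref{T:singrepuep} the corresponding irreducible $\Pi$ on $\B(X,J)$ fails the UEP, contradicting (iii).

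The only real obstacle is bookkeeping: juggling the five algebras $A$, $\rC^*(X)$, $\rC^*(X)+J$, $\B(X,J)$, $\bM_2(J)$ and tracking which annihilation condition applies where without conflating them. All the substantive content has been packaged in Theorems \ref{T:uepnondeg}, \ref{T:singrepuep} and Proposition \ref{P:projzero}; beyond this, the only extra input the proof requires is a single appeal to Arveson's extension theorem to lift CCP maps on $\rC^*(X)+J$ or $\rC^*(X)$ up to $A$.
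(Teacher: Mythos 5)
Your proposal is correct and follows essentially the same route as the paper: reduce to representations annihilating $\bM_2(J)$ via Theorem \ref{T:uepnondeg}, pass to the $(X,J)$-adapted pair $(\Pi,\pi)$ and translate the unique extension property through Theorem \ref{T:singrepuep}, then handle condition (ii) via Proposition \ref{P:projzero} together with Arveson's extension theorem and Lemma \ref{L:charOmegaJ}. The only differences are presentational: you state the bijection between irreducible $\Pi$'s and irreducible $\rho$'s up front and make the Arveson-extension step explicit, which the paper leaves implicit.
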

\begin{proof}
(i)+(ii)$\Rightarrow$(iii): 
Let $\Pi:\B(X,J)\to B(H)$ be an irreducible $*$-representation. In checking that $\Pi$ is a boundary representation for $\S(X,J)$, by virtue of Theorem \ref{T:uepnondeg} we may assume without loss of generality that $\Pi$ annihilates $\bM_2(J)$. Hence, by Lemma \ref{L:singrep} there is a unique $*$-representation $\pi:\rC^*(X)+J\to B(H)$ annihilating $J$ such that the pair $(\Pi,\pi)$ is $(X,J)$-adapted. Put $\rho=\pi|_{\rC^*(X)}$. It follows readily from \eqref{Eq:commutant} that $\pi$ and $\rho$ are  also irreducible, and hence unital. Moreover, $\rho$ annihilates $ \rC^*(X)\cap J$.
By (i), we see that $\rho$ is the only unital completely positive extension to $\rC^*(X)$ of $\pi|_X$. In turn, Proposition \ref{P:projzero} implies that $\pi|_X$ admits a unique contractive completely positive extension to $\rC^*(X)+J$. Hence,  Theorem \ref{T:singrepuep} implies that indeed $\Pi$ is a boundary representation for $\S(X,J)$ and (iii) holds.

(iii) $\Rightarrow$(i): Let $\rho:\rC^*(X)\to B(H)$ be an irreducible $*$-representation annihilating $ \rC^*(X)\cap J$. By Lemma \ref{L:charOmegaJ}, there is an irreducible $*$-representation $\pi:\rC^*(X)+J\to B(H)$ extending $\rho$ that annihilates $J$. By Lemma \ref{L:singrep}, there is a unique unital $*$-representation $\Pi:\B(X,J)\to B(H)$ annihilating $\bM_2(J)$ such that the pair $(\Pi,\pi)$ is $(X,J$)-adapted. It readily follows from \eqref{Eq:commutant} that $\Pi$ is irreducible, since $\pi$ is. Then, $\Pi$ is a boundary representation for $\S(X,J)$ by assumption. By Theorem \ref{T:singrepuep}, it follows that $\rho$ is a boundary representation for $X$, so indeed (i) holds. 

(iii) $\Rightarrow$(ii):  Assume that $z^X_{\at}(1-p^X_J)p_J\neq 0$.  By Proposition \ref{P:projzero}, there is an irreducible $*$-representation $\pi:\rC^*(X)\to B(H)$ annihilating $\rC^*(X)\cap J$ and admitting a unital completely positive extension to $\rC^*(X)+J$ that does not annihilate $J$. By Lemma \ref{L:charOmegaJ}, there is an irreducible $*$-representation $\sigma:\rC^*(X)+J\to B(H)$ annihilating $J$ and extending $\pi$. Apply Lemma \ref{L:singrep} and \eqref{Eq:commutant} to find a unique irreducible $*$-representation $\Sigma:\B(X,J)\to B(H)$ such that the pair $(\Sigma,\sigma)$ is $(X,J)$-adapted. By Theorem \ref{T:singrepuep}, we see that $\Sigma$ is not a boundary representation for $\S(X,J)$, contrary to our assumption. 
\end{proof}

We now arrive at the main result of the paper.

\begin{corollary}\label{C:counterex}
Let $A$ be a unital $\rC^*$-algebra containing a closed two-sided ideal $J$ and an operator system $X$.  Assume that 
\begin{enumerate}[{\rm (i)}]
\item $\rC^*(X)$ and $J$ have trivial intersection,
\item the projections $z^X_{\at}$ and $p_J$ are mutually orthogonal in $A^{**}$, and
\item all the irreducible $*$-representations of $\rC^*(X)$ annihilating $ \rC^*(X)\cap J$ are boundary representations for $X$.
\end{enumerate}
Then, $\S(X,J)\subset \B(X,J)$ is a counterexample to Arveson's hyperrigidity conjecture.
\end{corollary}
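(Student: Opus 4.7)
The plan is to deduce the corollary by a direct combination of Theorem \ref{T:projcriterion} and Corollary \ref{C:intnothyperrigid}. To exhibit $\S(X,J)\subset \B(X,J)$ as a counterexample to Arveson's conjecture, I must verify two things: every irreducible $*$-representation of $\B(X,J)$ is a boundary representation for $\S(X,J)$, while $\S(X,J)$ itself fails to be hyperrigid in $\B(X,J)$.

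The failure of hyperrigidity is the easier half: it is immediate from Corollary \ref{C:intnothyperrigid}, which needs only hypothesis (i). For the boundary claim, I would apply the implication ${\rm (i)}+{\rm (ii)}\Rightarrow{\rm (iii)}$ of Theorem \ref{T:projcriterion}, and the task reduces to matching its two hypotheses with assumptions (ii) and (iii) of the corollary. Hypothesis (i) of Theorem \ref{T:projcriterion} is literally hypothesis (iii) of the corollary (it is in fact slightly weaker, but this is harmless). For hypothesis (ii) of Theorem \ref{T:projcriterion}, the key bookkeeping observation is that since $\rC^*(X)\cap J=\{0\}$ by (i), the support projection $p^X_J$ of this (now trivial) ideal of $\rC^*(X)$ must vanish. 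Thus $z^X_{\at}(1-p^X_J)$ collapses to $z^X_{\at}$, and the required orthogonality with $p_J$ is exactly hypothesis (ii) of the corollary.

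There is no genuine obstacle in this argument: the substantive content has already been packaged into the preceding theorems, and what remains is merely the observation that trivial intersection eliminates the $1-p^X_J$ correction factor in Theorem \ref{T:projcriterion}. Assembling these pieces yields the conclusion that $\S(X,J)\subset\B(X,J)$ is a counterexample, as claimed.
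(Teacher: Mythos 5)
Your proposal is correct and follows exactly the paper's own argument: observe that hypothesis (i) forces $p_J^X=0$ so that $z^X_{\at}(1-p^X_J)=z^X_{\at}$, apply Theorem \ref{T:projcriterion} to get that all irreducibles are boundary representations, and invoke Corollary \ref{C:intnothyperrigid} for the failure of hyperrigidity. (The only cosmetic quibble is your parenthetical that hypothesis (i) of Theorem \ref{T:projcriterion} is ``slightly weaker'' than hypothesis (iii) of the corollary --- as stated they are literally identical.)
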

\begin{proof}
Note that $p_J^X=0$ in this case, by (i). On one hand, it follows from Theorem \ref{T:projcriterion} that all irreducible $*$-representations of $\B(X,J)$ are boundary representations for $\S(X,J)$. On the other hand, Corollary \ref{C:intnothyperrigid} shows that there is a unital $*$-representation of $\B(X,J)$ without the unique extension property with respect to $\S(X,J)$.
\end{proof}

In light of this result, it is natural to wonder if certain regularity assumptions on $A$ could preclude the existence of a counterexample. In other words, we seek conditions that prevent all irreducible $*$-representations from being boundary representations for $\S(X,J)$.  We identify below one such regularity condition.

\begin{theorem}\label{T:atcompat}
Let $A$ be a unital $\rC^*$-algebra containing a closed two-sided ideal $J$ and an operator system $X$. Assume that $z^X_{\at}\geq z^{A}_{\at}$ and that $J$ is not contained in $\rC^*(X)$. Then,  there is an irreducible  $*$-representation of $\B(X,J)$ that is not a boundary representation for $\S(X,J)$.
\end{theorem}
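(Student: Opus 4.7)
My plan is to invoke Theorem \ref{T:projcriterion} and show that condition (ii) of that theorem must fail under the hypotheses, thereby producing an irreducible $*$-representation of $\B(X,J)$ without UEP with respect to $\S(X,J)$. Concretely, I would aim to establish that $z^X_{\at}(1-p^X_J)p_J\neq 0$ in $A^{**}$. Via Proposition \ref{P:projzero}, this reduces to constructing a unital completely positive map $\psi:A\to B(H)$ whose restriction to $\rC^*(X)$ is an irreducible $*$-representation annihilating $\rC^*(X)\cap J$, yet with $\psi(J)\neq 0$.

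Such a $\psi$ would be obtained by compression from a suitably chosen irreducible representation of $A$. Since $J\not\subset \rC^*(X)$, pick $j_0\in J\setminus \rC^*(X)$, so in particular $j_0\neq 0$; then choose an irreducible $*$-representation $\sigma:A\to B(H)$ with $\sigma(j_0)\neq 0$. The assumption $z^X_{\at}\geq z^A_{\at}$ combined with Lemma \ref{L:atcomp} ensures that $\sigma|_{\rC^*(X)}$ splits as a direct sum $\bigoplus_\alpha \pi_\alpha$ of irreducible $*$-representations of $\rC^*(X)$ on mutually orthogonal reducing subspaces $H_\alpha\subset H$. For each $\alpha$, the compression $\psi_\alpha(\,\cdot\,):=P_{H_\alpha}\sigma(\,\cdot\,)|_{H_\alpha}$ is a UCP map from $A$ to $B(H_\alpha)$ whose restriction to $\rC^*(X)$ is the irreducible $\pi_\alpha$.

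The heart of the argument lies in selecting an index $\alpha$ such that simultaneously (a) $\pi_\alpha$ annihilates $\rC^*(X)\cap J$, and (b) $\psi_\alpha(J)\neq 0$. Condition (a) constrains only the diagonal action of $\sigma|_{\rC^*(X)}$ on $H_\alpha$, whereas (b) demands that $\sigma$ itself detect $J$ on $H_\alpha$. This selection step is the main obstacle, and it is where the hypothesis $j_0\notin \rC^*(X)$ must be leveraged substantively: geometrically it forces $\sigma(j_0)$ to have nontrivial off-diagonal action relative to the decomposition $H=\bigoplus_\alpha H_\alpha$, since no element of $\rC^*(X)$ can realize it under $\sigma$. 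Together with the fact that $\sigma(J)H$ is dense in $H$ (by irreducibility of $\sigma$ and nontriviality of $\sigma(J)$), one expects to locate an appropriate summand. Once such an $\alpha$ is isolated, $\psi_\alpha$ is the UCP map sought, and the theorem follows from Proposition \ref{P:projzero} together with Theorem \ref{T:projcriterion}.
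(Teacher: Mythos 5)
Your overall reduction is the same as the paper's: both proofs aim to show that $z^X_{\at}(1-p^X_J)p_J\neq 0$ and then conclude via Theorem \ref{T:projcriterion} (with Proposition \ref{P:projzero} mediating), and both use the hypothesis $z^X_{\at}\geq z^A_{\at}$ in an essential way. However, the step you flag as ``the main obstacle'' is not a technical detail awaiting completion --- it \emph{is} the theorem. Unwinding your construction: since $\sigma|_J$ is irreducible whenever $\sigma$ is irreducible with $\sigma(J)\neq 0$, your condition (b) is automatic for every nonzero summand $H_\alpha$ (if $P_{H_\alpha}\sigma(J)P_{H_\alpha}=0$ then $\sigma(k)P_{H_\alpha}=0$ for all $k\in J$, forcing $H_\alpha\perp\ol{\sigma(J)H}=H$). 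So what you actually need is an irreducible $\sigma$ of $A$ with $\widehat\sigma(p_J)=1$ and $\widehat\sigma\bigl(z^X_{\at}(1-p^X_J)\bigr)\neq 0$, i.e.\ $\widehat\sigma\bigl(z^X_{\at}(1-p^X_J)p_J\bigr)\neq 0$; the existence of such a $\sigma$ is precisely the statement $z^A_{\at}(1-p^X_J)p_J\neq 0$, which (given $z^A_{\at}\leq z^X_{\at}$) is exactly the inequality to be proved. Your proposal is therefore circular at its crux, and the heuristics offered do not close the circle: $j_0\notin\rC^*(X)$ does not force $\sigma(j_0)$ to have off-diagonal blocks (e.g.\ $\sigma$ may annihilate $j_0$, or $\sigma(j_0)$ may happen to equal $\sigma(x)$ for some $x\in\rC^*(X)$), and the density of $\sigma(J)H$ only re-proves the automatic condition (b).

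The genuine difficulty is that $p_J-p^X_J$ is merely some nonzero projection in $A^{**}$, and in general a nonzero projection in $A^{**}$ \emph{can} be annihilated by the atomic projection $z^A_{\at}$ (think of the diffuse part of $C[0,1]^{**}$), so no soft argument will produce an irreducible representation of $A$ that ``sees'' it. The paper's proof supplies the missing ingredient from Pedersen's theory: $p^X_J$ and $p_J$ are weak-$*$ limits of increasing approximate units, hence universally measurable; $p_J-p^X_J=(1-p^X_J)p_J$ is then a nonzero positive universally measurable element \cite[Proposition 4.3.13]{pedersen2018}, and the atomic representation is faithful on such elements \cite[Theorem 4.3.15]{pedersen2018}, giving $z^A_{\at}(1-p^X_J)p_J\neq 0$. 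Some input of this kind (or a substitute for it) is indispensable, and without it your argument does not go through. (As a side remark, your scheme would succeed in the special case $\rC^*(X)\cap J=\{0\}$, where condition (a) is vacuous and any irreducible $\sigma$ with $\sigma(J)\neq 0$ works; but the theorem is stated, and needed, in greater generality.)
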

\begin{proof}
By assumption, we have that $\rC^*(X)\cap J$ is proper subset of $J$. Since norm-closed subspaces are always weakly closed, we find that $( \rC^*(X)\cap J)^{\perp\perp}\cap A=\rC^*(X)\cap J$ and $J^{\perp\perp}\cap A=J$. Therefore,  $(\rC^*(X)\cap J)^{\perp\perp}$ is a proper subset of $J^{\perp\perp}$ and $p_J-p_J^X\neq 0$.  In the weak-$*$ topology of $A^{**}$, the projections $p_J^X$ and $p_J$ are the limits of increasing, positive, contractive approximate units for $\rC^*(X)\cap J$ and $J$, respectively \cite[Proposition 2.5.8]{blecher2004operator}. Hence, both $1-p_J^X$ and $p_J$ are universally measurable elements in $A^{**}$. 
 Recall now that $p_J-p_J^X=(1-p_J^X)p_J$ by \eqref{Eq:pJX}, so that  $(1-p_J^X)p_J$ is also universally measurable by  \cite[Proposition 4.3.13]{pedersen2018}.  Applying \cite[Theorem 4.3.15]{pedersen2018}, we find  $z^A_{\at}(1-p_J^X)p_J\neq 0$.
Finally, because $z^X_{\at}\geq z^A_{\at}$, we necessarily have $z^X_{\at}(1-p_J^X)p_J\neq 0$. Hence, Theorem \ref{T:projcriterion} implies the desired conclusion.
\end{proof}

We record an immediate consequence. Recall that a unital $\rC^*$-algebra is \emph{liminal} when all its irreducible $*$-representations are finite-dimensional.

\begin{corollary}\label{C:liminal}
Let $A$ be a liminal, unital $\rC^*$-algebra. Let $X\subset A$ be an operator system and let $J\subset A$ be a closed two-sided ideal not contained in $\rC^*(X)$. Then,  there is an irreducible  $*$-representation of $\B(X,J)$ that is not a boundary representation for $\S(X,J)$.
\end{corollary}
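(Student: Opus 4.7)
The plan is to deduce the corollary directly from Theorem \ref{T:atcompat} by verifying its hypothesis $z^X_{\at}\geq z^A_{\at}$ under the liminality assumption on $A$.

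By Lemma \ref{L:atcomp}, this inequality is equivalent to the statement that for every irreducible $*$-representation $\pi:A\to B(H)$, the restriction $\pi|_{\rC^*(X)}$ decomposes as a direct sum of irreducible $*$-representations of $\rC^*(X)$. Since $A$ is liminal, every such $\pi$ acts on a finite-dimensional Hilbert space $H$. Then $\pi(\rC^*(X))$ is a finite-dimensional $\rC^*$-subalgebra of $B(H)$, so the restriction is a nondegenerate $*$-representation of $\rC^*(X)$ on a finite-dimensional space, which is automatically unitarily equivalent to a finite direct sum of irreducible $*$-representations of $\rC^*(X)$ (for instance by reducing to the direct sum decomposition of the image into matrix blocks and pulling the decomposition back along $\pi$).

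With $z^X_{\at}\geq z^A_{\at}$ established, and with $J$ not contained in $\rC^*(X)$ by hypothesis, Theorem \ref{T:atcompat} immediately yields an irreducible $*$-representation of $\B(X,J)$ that fails to be a boundary representation for $\S(X,J)$.

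The only potentially delicate point is the reduction step via Lemma \ref{L:atcomp}, but once one observes that finite-dimensionality of $\pi$ automatically renders $\pi|_{\rC^*(X)}$ completely reducible, everything else is routine.
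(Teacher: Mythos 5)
Your proposal is correct and follows the same route as the paper: liminality gives finite-dimensionality of each irreducible $*$-representation of $A$, hence complete reducibility of its restriction to $\rC^*(X)$, which by Lemma \ref{L:atcomp} yields $z^X_{\at}\geq z^A_{\at}$, and Theorem \ref{T:atcompat} concludes. You have simply spelled out the finite-dimensional reducibility step that the paper leaves implicit.
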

\begin{proof}
Since all the irreducible $*$-representations of $A$ are finite-dimensional, we may invoke Lemma \ref{L:atcomp} to see that $z_{\at}^X\geq z^A_{\at}$. An application of Theorem \ref{T:atcompat} then finishes the proof.
\end{proof}

We do not know to which extent the liminality condition can be relaxed.  Because of the counterexample constructed in \cite{BDO2024}, we know that the conclusion of the previous corollary may fail to hold in the type $I$ (i.e. \emph{post}liminal) case.

\section{The compact operators}\label{S:compact}

In this final section, we examine the special case where the $\rC^*$-algebra $A$ in our construction is taken to be $B(H_0)$ for some Hilbert space $H_0$, and $J$ is taken to be the ideal $\fK(H_0)$ of compact operators thereon. 
Our first task is to show that, in this setting,  the mutual orthogonality of the projections required in Corollary \ref{C:counterex}  is implied by a von Neumann algebraic condition. 

Throughout, given a $\rC^*$-subalgebra $D\subset B(H_0)$, we let $D''\subset B(H_0)$ denote its double commutant, which coincides with the von Neumann algebra generated by $D$.
We denote by $p_\fK\in B(H_0)^{**}$ the support projection of $\fK(H_0)$. By \cite[Lemma 2.5]{CTh2022}, the projection $p_{\fK}$ has the following important property. Given a bounded linear map $\phi:B(H_0)\to B(H)$, if we define $\phi_a,\phi_s:B(H_0)\to B(H)$ by
\[
\phi_a(t)=\widehat{\phi}(tp_{\fK}) \qand \phi_s(t)=\widehat{\phi}(t(1-p_{\fK}))
\]
for each $t\in B(H_0)$, then $\phi_a$ is weak-$*$ continuous and $\phi_s$ annihilates $\fK(H_0)$. We now establish a crucial preliminary fact.

\begin{proposition}\label{P:projcompact}
Let $X\subset B(H_0)$ be an operator system.  Assume that  the von Neumann algebra $\rC^*(X)''$ contains no type $I$ factor as a direct summand. Then, the projections $z^X_{\at}(1-p_{\fK}^X)$ and $p_{\fK}$ are mutually orthogonal in $B(H_0)^{**}$.
\end{proposition}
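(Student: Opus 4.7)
The plan is to apply Proposition~\ref{P:projzero} in the form (ii)$\Rightarrow$(i): it suffices to show that every pure unital completely positive map $\theta:B(H_0)\to B(H)$ whose restriction to $\rC^*(X)$ is an irreducible $*$-representation must annihilate $\fK(H_0)$. (Interestingly, the extra hypothesis in Proposition~\ref{P:projzero}(ii) that $\theta|_{\rC^*(X)}$ annihilate $\rC^*(X)\cap \fK(H_0)$ will not actually be needed.) I argue by contradiction: assuming such a $\theta$ does not annihilate $\fK(H_0)$, I produce a minimal projection in the commutant $\rC^*(X)'$ and then invoke the standard von Neumann algebraic dichotomy that a von Neumann algebra $M$ admits a type~$I$ factor as a direct summand if and only if $M'$ admits a minimal projection.

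The first step is to unpack the structure of $\theta$. Pureness of $\theta$ yields, via \cite[Corollary 1.4.3]{arveson1969}, a minimal Stinespring dilation $(K,\pi,V)$ with $\pi:B(H_0)\to B(K)$ irreducible. Since $\theta$ does not annihilate $\fK(H_0)$, neither does $\pi$. I then invoke the uniqueness of the irreducible representation of the compact operators: $\pi$ restricts to a non-zero, hence injective, $*$-representation of the simple $\rC^*$-algebra $\fK(H_0)$, and irreducibility of $\pi$ gives $\pi(\fK(H_0))'=\bC 1$, so this restriction is unitarily equivalent to the identity representation on $H_0$. Because compact operators separate elements of $B(H_0)$, this unitary equivalence extends from $\fK(H_0)$ to all of $B(H_0)$. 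Hence, after conjugating by a unitary, I may take $K=H_0$, $\pi=\id$, and $\theta(t)=V^*tV$ for some isometry $V:H\to H_0$.

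Set $P=VV^*\in B(H_0)$, the range projection of $V$. The multiplicativity of $\theta|_{\rC^*(X)}$ on $\rC^*(X)$ is equivalent to $VH$ being invariant, and thus reducing, for $\rC^*(X)$, so $P\in \rC^*(X)'$. The $*$-representation $\theta|_{\rC^*(X)}$ is unitarily equivalent to the compression of $\rC^*(X)$ to $PH_0$, and its irreducibility translates into $P\rC^*(X)'P=\bC P$, i.e. $P$ is a minimal projection in $\rC^*(X)'$.

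Applying the aforementioned dichotomy to $M=\rC^*(X)''$, the existence of a minimal projection in $\rC^*(X)'$ produces a type~$I$ factor direct summand of $\rC^*(X)''$, contradicting the hypothesis and completing the proof. The main conceptual step is the identification of the pure UCP map $\theta$ with a compression by an isometry onto a reducing subspace of $\rC^*(X)$; once one has this, the assumption that $\theta|_{\rC^*(X)}$ is an irreducible $*$-representation is exactly what manufactures the minimal projection in $\rC^*(X)'$ required to trigger the contradiction.
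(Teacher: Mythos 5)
Your proof is correct, but it takes a genuinely different route from the paper's. The paper splits the pure extension $\psi$ into its normal and singular parts $\psi_a(t)=\widehat\psi(tp_{\fK})$ and $\psi_s(t)=\widehat\psi(t(1-p_{\fK}))$; purity forces $\psi_a=c\psi$, so $\psi$ is either entirely singular (and annihilates $\fK(H_0)$, as desired) or entirely normal, in which case Kaplansky density upgrades $\psi|_{\rC^*(X)}$ to a normal irreducible $*$-representation of $\rC^*(X)''$ whose support projection cuts out a direct summand isomorphic to $B(H)$ --- a type $I$ factor, contradiction. You instead work with the minimal Stinespring dilation: purity makes it irreducible, the uniqueness of the irreducible representation of $\fK(H_0)$ (together with nondegeneracy on the ideal and the fact that a representation of $B(H_0)$ nondegenerate on $\fK(H_0)$ is determined by its restriction there) forces the dilation to be the identity representation, so $\theta(t)=V^*tV$; multiplicativity on $\rC^*(X)$ makes $P=VV^*$ a reducing projection, irreducibility makes it minimal in $\rC^*(X)'$, and the standard dichotomy (a minimal projection in $M'$ with central support $z$ makes $M'z$, hence $Mz$, a type $I$ factor) gives the contradiction inside $\rC^*(X)''$. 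Your route is more concrete and avoids the normal/singular decomposition entirely, and as you note it establishes the slightly stronger fact that the hypothesis that $\theta|_{\rC^*(X)}$ annihilate $\rC^*(X)\cap\fK(H_0)$ is superfluous; the paper's route stays within the bidual framework used throughout and locates the type $I$ summand directly in $\rC^*(X)''$ rather than via the commutant. Two steps you state tersely but which are standard and should perhaps be cited or spelled out: that an irreducible representation restricted to an ideal on which it is nonzero remains irreducible (needed for $\pi(\fK(H_0))'=\bC 1$), and that for a projection $P$ in $\rC^*(X)'$ the commutant of the compression to $PH_0$ is $P\rC^*(X)'P$.
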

\begin{proof}
Let $\pi:\rC^*(X)\to B(H)$ be an irreducible $*$-representation annihilating $\rC^*(X)\cap \fK(H_0)$. Let $\psi:B(H_0)\to B(H)$ be a pure unital completely positive map extending $\pi$. Since $p_{\fK}$ is central, it follows that $\psi_a$ is dominated by $\psi$ in the usual order on completely positive maps. By purity of $\psi$, there is some scalar $c\geq 0$ such that $\psi_a=c\psi$. 

If $\psi_a\neq 0$, then $c\neq 0$ and $\psi=\psi_a/c$ is weak-$*$ continuous. In turn, by the Kaplansky density theorem it readily follows that $\psi|_{\rC^*(X)''}$ is a weak-$*$ continuous irreducible $*$-representation. If $q\in \rC^*(X)''$ denote its support projection, then we have \[ q\rC^*(X)''\cong \psi(\rC^*(X)'')=\pi(\rC^*(X))''=B(H)
\] contrary to our assumption that $\rC^*(X)''$ has no type $I$ factor direct summand. We thus conclude that $\psi_a=0$ so  $\psi=\psi_s$ must annihilate $\fK(H_0)$. 

Finally, Proposition \ref{P:projzero} implies the desired conclusion.
\end{proof}

We now arrive at our desired result, which gives a sufficient condition for condition (ii) in Corollary \ref{C:counterex} to hold.

\begin{theorem}\label{T:counterexK}
Let $X\subset B(H)$ be an operator system. Assume that 
\begin{enumerate}[{\rm (i)}]
\item $\rC^*(X)$ and $\fK(H)$ have trivial intersection,
\item the von Neumann algebra $\rC^*(X)''$ contains no type $I$ factor as a direct summand, and
\item all the irreducible $*$-representations of $\rC^*(X)$ are boundary representations for $X$.
\end{enumerate}
Then, $\S(X,\fK(H))\subset \B(X,\fK(H))$ is a counterexample to Arveson's hyperrigidity conjecture.
\end{theorem}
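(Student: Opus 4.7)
The plan is to deduce the theorem as a direct application of Corollary~\ref{C:counterex} with $A = B(H)$ and $J = \fK(H)$, verifying each of the three hypotheses of that corollary in turn. Hypothesis~(i) of Corollary~\ref{C:counterex} is exactly hypothesis~(i) of the present theorem, so there is nothing to do there. For hypothesis~(iii) of Corollary~\ref{C:counterex}, observe that since $\rC^*(X) \cap \fK(H) = \{0\}$, every irreducible $*$-representation of $\rC^*(X)$ trivially annihilates this intersection; thus the hypothesis reduces to our present hypothesis~(iii) that all irreducible $*$-representations of $\rC^*(X)$ are boundary representations for $X$.

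The substantive step is to verify hypothesis~(ii) of Corollary~\ref{C:counterex}, namely the mutual orthogonality of $z^X_{\at}$ and $p_\fK$ in $B(H)^{**}$. The key observation is that the support projection $p^X_\fK$ of the closed two-sided ideal $\rC^*(X) \cap \fK(H)$ vanishes, since this ideal is trivial by hypothesis~(i). Consequently, we have the identity $z^X_{\at}(1 - p^X_\fK) = z^X_{\at}$ in $\rC^*(X)^{\perp\perp} \subset B(H)^{**}$. Now Proposition~\ref{P:projcompact}, which applies because hypothesis~(ii) of the present theorem asserts that $\rC^*(X)''$ contains no type~$I$ factor as a direct summand, yields the mutual orthogonality of $z^X_{\at}(1-p^X_\fK)$ and $p_\fK$. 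By the identity above, this is the same as the mutual orthogonality of $z^X_{\at}$ and $p_\fK$, which is what we wanted.

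With all three hypotheses of Corollary~\ref{C:counterex} verified, that corollary produces the desired conclusion that $\S(X, \fK(H)) \subset \B(X, \fK(H))$ is a counterexample to Arveson's hyperrigidity conjecture.

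There is no real obstacle in this argument beyond correctly identifying the interplay between the two support projections $p^X_\fK$ and $p_\fK$: the triviality of $\rC^*(X) \cap \fK(H)$ forces $p^X_\fK = 0$, which is precisely what converts the conclusion of Proposition~\ref{P:projcompact} into the cleaner mutual orthogonality statement required by Corollary~\ref{C:counterex}. The rest of the proof is bookkeeping.
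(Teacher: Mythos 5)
Your proof is correct and takes essentially the same route as the paper: the paper combines Corollary \ref{C:intnothyperrigid}, Theorem \ref{T:projcriterion} and Proposition \ref{P:projcompact} directly, whereas you invoke the packaged Corollary \ref{C:counterex}, whose proof consists of exactly those ingredients. Your observation that $p^X_\fK=0$ (so that the conclusion of Proposition \ref{P:projcompact} becomes the orthogonality of $z^X_{\at}$ and $p_\fK$) is precisely the reduction the paper makes inside the proof of Corollary \ref{C:counterex}.
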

\begin{proof}
First, Corollary \ref{C:intnothyperrigid} implies that $\B(X,\fK(H))$ has a unital $*$-representation without the unique extension property with respect to $\S(X,\fK(H))$. Consequently,  to show that $\S(X,\fK(H))\subset \B(X,\fK(H))$  serves as a counterexample to   Arveson's hyperrigidity conjecture, we must verify that all irreducible $*$-representations of $\B(X,\fK(H))$ are boundary representations for $\S(X,\fK(H))$. But this follows at once upon combining Theorem \ref{T:projcriterion} with Proposition \ref{P:projcompact}.
\end{proof}

We illustrate the previous result in some concrete cases, thereby essentially recovering the counterexample from \cite{BDO2024}.

\begin{example}\label{E:BDO}
Let $\bB_d\subset\bC^d$ denote the open unit ball, the boundary of which we denote by $\bS_d$. Let $\sigma$ denote the surface area measure on $\bS_d$. Let $$\pi:\rC(\bS_d)\to B(L^2(\bS_d,\sigma))$$ denote the usual isometric $*$-representation by multiplication operators. Let $X\subset B(L^2(\bS_d,\sigma))$ denote the image under $\pi$ of the closure of the polynomials. It is well known that $\rC^*(X)=\pi(\rC(\bS_d))$ and that this $\rC^*$-algebra contains no non-zero compact operators. In addition, since $\sigma$ is a diffuse measure, $\rC^*(X)''\cong L^\infty(\bS_d,\sigma)$ admits no type $I$ factor as a direct summand. Finally, every point in $\bS_d$ is a peak point for the polynomials (see \cite[Paragraph 10.2.3]{rudin2008}), so that every irreducible $*$-representation of $\rC^*(X)$ is a boundary representation for $X$.  Denoting the compact operators on $L^2(\bS_d,\sigma)$ simply by $\fK$, we see that $\S(X,\fK)\subset \B(X,\fK)$ is a counterexample to Arveson's conjecture,  by Theorem \ref{T:counterexK}. When $d=1$,  ignoring some minor technical differences of no importance for our purposes, we essentially obtain the counterexample found in \cite{BDO2024}.
\qed
\end{example}

We close the paper by showing that a standard ampliation trick guarantees that the assumption that $\rC^*(X)$ should have trivial intersection with $\fK(H)$ can be arranged. Given a subset $Y\subset B(H)$, we let $Y_\infty=\{\bigoplus_{n=1}^\infty y: y\in Y\}\subset B(H^{(\infty)})$. 

\begin{corollary}\label{C:ampcounterex}
Let $X\subset B(H)$ be an operator system. Assume that 
\begin{enumerate}[{\rm (i)}]
\item the von Neumann algebra $\rC^*(X)''$ contains no type $I$ factor as a direct summand, and
\item all the irreducible $*$-representations of $\rC^*(X)$ are boundary representations for $X$.
\end{enumerate}
Then, $\S(X_\infty,\fK(H^{(\infty)}))\subset \B(X_\infty,\fK(H^{(\infty)})$ is a counterexample to Arveson's hyperrigidity conjecture.
\end{corollary}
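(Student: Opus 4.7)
The plan is to reduce to Theorem \ref{T:counterexK} applied to the ampliation $X_\infty \subset B(H^{(\infty)})$ with the ideal $\fK(H^{(\infty)})$. To this end, I will verify the three hypotheses of that theorem in turn, using the standing assumptions (i) and (ii) on $X$ itself.

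First, I will check that $\rC^*(X_\infty) \cap \fK(H^{(\infty)}) = \{0\}$. The canonical map $\iota : \rC^*(X) \to \rC^*(X_\infty)$ defined by $\iota(a) = \bigoplus_{n=1}^\infty a$ is an injective $*$-homomorphism sending $X$ onto $X_\infty$, hence $\rC^*(X_\infty) = \iota(\rC^*(X))$. If $a \in \rC^*(X)$ is non-zero, then $\iota(a)$ has $\|a\|$ as an eigenvalue of infinite multiplicity (for $|a|$) and is therefore never compact on $H^{(\infty)}$, so condition (i) of Theorem \ref{T:counterexK} holds.

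Next, I will verify that $\rC^*(X_\infty)''$ contains no type $I$ factor as a direct summand. The standard identification of commutants under ampliation yields
\[
\rC^*(X_\infty)'' = \iota(\rC^*(X))'' = \{\iota(m) : m \in \rC^*(X)''\},
\]
so $\iota$ extends to a weak-$*$ continuous $*$-isomorphism $\rC^*(X)'' \to \rC^*(X_\infty)''$. Since $*$-isomorphisms preserve central decompositions and type classification, the absence of type $I$ factor direct summands in $\rC^*(X)''$ carries over to $\rC^*(X_\infty)''$.

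Finally, I will check that every irreducible $*$-representation of $\rC^*(X_\infty)$ is a boundary representation for $X_\infty$. An irreducible $*$-representation $\tau : \rC^*(X_\infty) \to B(K)$ corresponds via $\iota$ to the irreducible $*$-representation $\rho = \tau \circ \iota : \rC^*(X) \to B(K)$, which by hypothesis is a boundary representation for $X$. Any unital completely positive map $\psi : \rC^*(X_\infty) \to B(K)$ agreeing with $\tau$ on $X_\infty$ produces $\psi \circ \iota : \rC^*(X) \to B(K)$ agreeing with $\rho$ on $X$; the unique extension property for $\rho$ forces $\psi \circ \iota = \rho = \tau \circ \iota$, and since $\iota$ is surjective onto $\rC^*(X_\infty)$ we get $\psi = \tau$. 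Hence $\tau$ has the unique extension property and is a boundary representation for $X_\infty$.

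With (i), (ii) and (iii) of Theorem \ref{T:counterexK} verified for $X_\infty$, the conclusion that $\S(X_\infty, \fK(H^{(\infty)})) \subset \B(X_\infty, \fK(H^{(\infty)}))$ is a counterexample to Arveson's hyperrigidity conjecture follows immediately. The only mildly delicate point is the double commutant identification in step two; everything else is routine transfer under the ampliation $*$-isomorphism.
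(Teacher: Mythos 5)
Your proposal is correct and follows essentially the same route as the paper: reduce to Theorem \ref{T:counterexK} by transporting the three hypotheses along the ampliation $*$-isomorphism. (For the boundary representations the paper simply cites Arveson's invariance principle, whereas you reprove the transfer directly via surjectivity of $\iota$; both are fine, and your double-commutant identification $(S^{(\infty)})''=(S'')^{(\infty)}$ is the standard fact the paper also relies on.) One justification is off, however: for a non-zero $a\in\rC^*(X)$ it is not true in general that $\|a\|$ is an eigenvalue of $|a|$ (take $a$ to be multiplication by the coordinate function on $L^2[0,1]$), so $|\iota(a)|$ need not have $\|a\|$ as an eigenvalue at all, let alone one of infinite multiplicity. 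The conclusion $\rC^*(X_\infty)\cap\fK(H^{(\infty)})=\{0\}$ is nevertheless correct and easy to repair: if $a\neq 0$, pick a unit vector $\xi$ with $\|a\xi\|>0$; the vectors obtained by placing $\xi$ in the $n$-th summand of $H^{(\infty)}$ form an orthonormal, hence weakly null, sequence whose images under $\iota(a)$ all have norm $\|a\xi\|$, so $\iota(a)$ is not compact. (Alternatively: if $\iota(|a|)$ were compact and non-zero, its norm would be an eigenvalue of finite multiplicity, whereas every eigenspace of $\iota(|a|)$ is either trivial or infinite-dimensional.)
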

\begin{proof}
Consider the isometric, weak-$*$ continuous unital $*$-homomorphism $$\alpha:B(H) \to B(H^{(\infty)})$$ defined as
\[
\alpha(t)=\bigoplus_{n=1}^\infty t, \quad t\in B(H).
\]
Then $X_\infty=\alpha(X)$ and $\rC^*(X_\infty)=\alpha(\rC^*(X))$. By Arveson's invariance principle  \cite[Theorem 2.1.2]{arveson1969}, it follows that all irreducible $*$-representations of $\rC^*(X_\infty)$ are boundary representations for $X_\infty$. Moreover, by construction we see that $\rC^*(X_\infty)\cap \fK(H^{(\infty)})=\{0\}$. Finally, $\alpha$ is a weak-$*$  homeomorphic $*$-isomorphism onto its image,  so $\alpha(\rC^*(X))''=\alpha(\rC^*(X)'')$ does not contain a type $I$ factor as a direct summand. An application  Theorem \ref{T:counterexK} finishes the proof.
\end{proof}

\bibliographystyle{plain}
\bibliography{BDrep}
	
\end{document}